\documentclass[12pt, reqno]{amsart}
\usepackage{helvet}
 
\usepackage[english]{babel}
\usepackage{float,subfloat}
\usepackage{here}
\usepackage[letterpaper,top=2cm,bottom=2cm,left=3cm,right=3cm,marginparwidth=1.75cm]{geometry}
\usepackage{graphicx, subcaption}
\usepackage[colorlinks=true, allcolors=blue]{hyperref}
\usepackage{amsmath, amsthm, mathtools, amscd, amsfonts, amssymb}
\usepackage{xcolor}
\definecolor{darkgreen}{rgb}{0,0.51,0.11}
\usepackage{lmodern}
\usepackage{enumitem}
\usepackage{tikz-cd}
\newtheorem{theorem}{Theorem}[section]
\newtheorem{lemma}[theorem]{Lemma}
\newtheorem{proposition}[theorem]{Proposition}
\newtheorem{corollary}[theorem]{Corollary}
\theoremstyle{definition}
\newtheorem{definition}[theorem]{Definition}
\newtheorem{example}[theorem]{Example}

\newtheorem{remark}[theorem]{Remark}
\numberwithin{equation}{section}

\newcommand{\Z}{\ensuremath{\mathbb{Z}}}

\title{ On a bi-lateral Adding Machine and its characterization}

\author[]{P. Mehdipour}
	\address[Pouya Mehdipour]{%
		Departamento de Matem\'atica, Universidade Federal de Vi\c cosa, Brazil.
	}
	\email{pouya@ufv.br}
\author[]{R. M. S. dos Santos}
	\address[Rebeca M. S. dos Santos]{%
		Departamento de Matem\'atica, Universidade Federal de Vi\c cosa, Brazil.
	}
	\email{rebeca.marjorie@ufv.br}


\subjclass{Primary: 37Bxx, 68Qxx. Secondary: 37B10}

\keywords{ Symbolic dynamics, Adding Machine, Minimal Systems, zip shift}

\begin{document}
\maketitle

\begin{abstract}
In this paper, we introduce a bilateral adding machine based on a zip space with two sets of alphabets. We demonstrate that these adding machines are homeomorphisms and provide necessary and sufficient conditions for their characterization.
\end{abstract} 
\section{Introduction}

The \emph{adding machine}, also known as the \emph{odometer}, is a classical example in the study of topological dynamics and symbolic systems. It is defined as a minimal and equicontinuous dynamical system on a compact zero-dimensional space. Formally, an adding machine associated to a sequence of integers $\alpha = (j_1, j_2, \dots)$, with each $j_i \geq 2$, is the shift map on the inverse limit space $\Delta_\alpha = \varprojlim \mathbb{Z}/p_i\mathbb{Z}$, equipped with the product topology. The dynamics of the adding machine mimic base-$p$ addition, generalizing the notion of digital counters.

Adding machines have broad applications in the theory of dynamical systems. They arise naturally in the classification of minimal systems, particularly as models of regularly recurrent behavior \cite{BS},\cite{DM}, \cite{Nia}. Moreover, they serve as important tools in the construction of symbolic representations of more complex systems, such as subshifts and Bratteli-Vershik models \cite{CMG},\cite{Gao-Li}. Their simple structure and rich topological properties lead to further applications in ergodic theory and number theory, particularly through their connections with $p$-adic integers \cite{DS}, \cite{BK}, \cite{IST}.
In this paper, we extend the classical odometer by constructing it within a symbolic space known as a zip space, which is based on two distinct sets of alphabets. Zip symbolic spaces were introduced in \cite{LM} as a framework to extend two-sided shift spaces \cite{LB}, leading to the development of zip shifts—a new form of symbolic dynamics designed to encode and analyze non-invertible maps. The existence of appropriate partitions for non-invertible systems can lead to their conjugacy or semi-conjugacy with zip shift maps, making it possible to study complex and chaotic dynamics through symbolic representations.

Nevertheless, many dynamical systems lie outside the class of those that can be effectively modeled using zip shifts. In this context, the construction and study of non-zip shift maps become both meaningful and necessary. In this work, building upon the ideas and methods introduced in the work \cite{artigo}, we construct and study a bilateral adding machine that operates on a zip space. This example illustrates a homeomorphism  that is not a zip shift map. Furthermore, we provide necessary and sufficient conditions for characterizing such transformations.


\section{Zip space and Bilateral Adding Machine}

In this section, we introduce the concept of a zip space. The zip space are the base state space defined earlier in \cite{LM} to introduce the zip shift maps and spaces. Consider two finite alphabets \( Z = \{z_1, z_2, \ldots, z_k\} \) and \( S = \{s_1, s_2, \ldots, s_n\} \) with \( k \leq n \).

\begin{definition}
We say that the surjective function \( \tau \colon S \to Z \) is a \textit{transition map}. Note that \( \tau \) is not necessarily invertible.
\end{definition}

\begin{definition}\label{df1}
Let \( \Sigma_S \) be the S-full shift space. We define the (Z, S)-full zip space, denoted \( \Sigma_{Z,S} \), where each \( y \in \Sigma_{Z,S} \) corresponds to a point \( x \in \Sigma_S \) such that
\[
y_i =
\begin{cases}
x_i \in S, & \text{if } i \geq 0 \\
\tau(x_i), & \text{if } i < 0.
\end{cases}
\]
\end{definition}

\begin{example}
Let us consider the alphabets \( Z = \{a, b\} \) and \( S = \{0, 1, 2, 3\} \). We can define a transition map \( \tau \colon S \to Z \) given by:
\[
\tau(0) = \tau(2) = a, \quad \tau(1) = \tau(3) = b.
\]
Then, if we take the sequence \( (\ldots, 2, 0, 1 ; 3, 0, 1, \ldots) \), a sequence \( y \in \Sigma_{Z,S} \) has the form:
\[
y = (\ldots, \tau(2), \tau(0), \tau(1) ; 3, 0, 1, \ldots) = (\ldots, a, a, b ; 3, 0, 1, \ldots).
\]
\end{example}

\begin{definition}
Let \( \Sigma_{Z,S} \) be the (Z, S)-full zip space. We define $\sigma_{\tau}:\Sigma_{Z,S}\to\Sigma_{Z,S}$ as follows.
\begin{eqnarray}\label{ZS}
(\sigma_{\tau}(\bar x))_i=\left\{\begin{tabular}{ll}
$x_{i+1} \,\,\,\quad\quad \text{if}\,\,i\neq -1,$ \\
$\tau(x_{0}) \quad\quad \text{if}\,\,i=-1.$
\end{tabular}\right.
\end{eqnarray}
We call this map \textit{Zip shift} and the pair $(\Sigma, \sigma_{\tau})$ is called the \textit{Zip shift Space}.
\end{definition}

We define the metric \( d \colon \Sigma_{Z,S} \times \Sigma_{Z,S} \to \mathbb{R} \) given by:
\begin{equation}\label{metric}
d(x, y) =
\begin{cases}
\frac{1}{2^{M(x,y)}}, & \text{if } x \neq y \\
0, & \text{if } x = y
\end{cases}
\end{equation}
where \( M(x, y) = \min \{ |i| \; ; \; x_i \neq y_i, \; i \in \mathbb{Z} \} \).
The set of basic cylinder sets,
\[\mathcal{C}=\{C_{x_{i}}^{i}:x_{i}\in Z\,\textit{for }\, i<0\,\textit{and}\, x_{i}\in S\,\textit{for }\, i\geq 0 \}\] is a sub-basis for the product topology equivalent to the topology induced by the above metric on the (Z, S)-full zip space. The set of general cylinder sets, which generate a basis for this topology, is denotes as following. 
\[\mathcal{B}=\{C_{x_{-n}, \ldots, x_0, \ldots, x_k}^{-n,\dots,-1,0,\dots, k}: x_{i}\in Z\,\textit{for }\, i<0\,\textit{and}\, x_{i}\in S\,\textit{for }\,i\geq 0\}.\]

\subsection*{Bilateral Adding Machine}

In this subsection, we present the concept of a bilateral adding machine. Initially, let us consider the finite alphabets \( Z = \{a_0, b_1\} \) and \( S = \{0, 1, \ldots, j - 1\} \) with \( j \geq 2 \).

\begin{definition}\label{df2}
Let \( \alpha = (\ldots, 2, 2 ; j, j, \ldots) \) be a bi-infinite sequence. Consider $Z=\{a_0,b_1\}$ and $S=\{0,1,\dots,j-1\}$ and let  \( \Sigma_{Z,S} \) be the (Z, S)-full zip shift space with some \( \tau \colon S \to Z \).  We call the set \( \Delta_\alpha =\Sigma_{Z,S} \), an \textit{$\alpha$-adic bilateral adding machine space}.

\end{definition}

\begin{remark}
Since our goal is to generalize the one-sided adding machine, from the binary perspective, we set \( a_0 = 0 \) and \( b_1 = 1 \) and the indices for $a$ and $b$ are stated with this aim. Thus, \( a_0 + 1 = b_1 \) and \( b_1 + 1 = a_0 \).
\end{remark}

\begin{definition}\label{5.2.2}
Given sequences \( x, y, z \in \Delta_\alpha \), the sum \( x + y = z \) is defined as follows:

\begin{itemize}
\item For \( i \geq 0 \): The resulting coordinates are:
\[
z_0 = (x_0 + y_0) \mod j, \quad z_1 = (x_1 + y_1 + t_1) \mod j
\]
where
\[
t_1 =
\begin{cases}
0, & \text{if } x_0 + y_0 < j \\
1, & \text{if } x_0 + y_0 \geq j
\end{cases}
\]
and
\[
z_2 = (x_2 + y_2 + t_2) \mod j
\]
where
\[
t_2 =
\begin{cases}
0, & \text{if } x_1 + y_1 + t_1 < j \\
1, & \text{if } x_1 + y_1 + t_1 \geq j
\end{cases}
\]
To obtain the next terms, simply continue with this reasoning. In this case, the addition is performed as in the one-sided adding machine.

\item For \( i < 0 \): The resulting coordinates are:
\[
z_{-1} = (x_{-1} + y_{-1} + s_1) \mod 2 \quad \text{with } s_1 = \tau(x_0 + y_0\mod j),
\]
\[
z_{-2} = (x_{-2} + y_{-2} + s_2) \mod 2
\]
where
\[
s_2 =
\begin{cases}
a_0, & \text{if } x_{-1} + y_{-1} + s_1 < 2 \\
b_1, & \text{if } x_{-1} + y_{-1} + s_1 \geq 2
\end{cases}
\]
To obtain the previous terms, simply continue with this reasoning.
\end{itemize}
\end{definition}

\begin{example}
Let \( \alpha = (\ldots, 2, 2 ; 3, 3, \ldots) \), and the sequences
\[
x = (\ldots, b_1, a_0 ; 3, 1, \ldots), \quad y = (\ldots, b_1, b_1 ; 2, 0, \ldots) \in \Delta_\alpha,
\]
where the transition map $\tau: \{0,1,2\}\to\{a_0,b_1\} $ is defined as $\tau(2)=b_1$ and $\tau(0)=\tau(1)=a_0$Then:
\small{
\begin{eqnarray*}
x + y &=& (\ldots, (b_1 + b_1 + s_2) , (a_0 + b_1 + \tau(2))_{(\leftarrow\mod 2)} ;_{(\rightarrow\mod 3)} (3 + 2) , (1 + 0 + t_1) , \ldots)
\\
&=& (\ldots, (b_1 + b_1 + b_1), (a_0 + b_1 + b_1)_{(\leftarrow\mod 2)} ;_{(\rightarrow\mod 3)} 5, 2, \ldots)
\\
&=& (\ldots, b_1, a_0 ; 2, 2, \ldots)
\end{eqnarray*}
}
For simplicity, we have omitted repeated occurrences of mod 2 in the negative entries and mod 3 in the positive entries. To this aim is considered the notation
 $(\leftarrow\mod 2)$ and $(\rightarrow\mod 3)$.
\end{example}

\begin{definition}
Let \( f_\alpha \colon \Delta_\alpha \to \Delta_\alpha \) be defined as:
\[
f_\alpha(x) = (\ldots, x_{-2}, x_{-1} ; x_0, x_1, \ldots) + (\ldots, a_0, a_0 ; 1, 0,0,0, \ldots).
\tag{5.4}
\]
This is called the \textit{bilateral adding machine map}.
\end{definition}

\begin{example}
Let \( \alpha = (\ldots, 2, 2 ; 4, 4, \ldots) \) and \( x = (\ldots, a_0, b_1 ; 2, 1, \ldots) \in \Delta_\alpha \), where the transition map $\tau: \{0,1,2,3\}\to\{a_0,b_1\} $ is defined as $\tau(0)=\tau(3)=b_1$ and $\tau(1)=\tau(2)=a_0$ then:
\small{
\begin{eqnarray*}
f_\alpha(x) &=& (\ldots, (a_0 + a_0 + s_2), (b_1 + a_0 + \tau(3))_{(\leftarrow\mod 2)} ;_{(\rightarrow\mod 4)} (3), (1 + 0 + t_1), \ldots)
\\
&=& (\ldots, (a_0 + a_0 + b_1) , (b_1 + a_0 + b_1)_{(\leftarrow\mod 2)} ;_{(\rightarrow\mod 4)} 3, 1 , \ldots)
\\
&=&(\ldots, b_1, a_0 ; 3, 1, \ldots)
\end{eqnarray*}
}
\end{example}
To avoid overloading the upcoming proofs, from this point on we will omit the modulo \( j \) sums. However, it should be clear to the reader that all operations are performed according to Definition \ref{5.2.2}

\begin{proposition}
The bilateral adding machine map \( f_\alpha \) is a homeomorphism.
\end{proposition}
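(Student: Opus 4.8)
The plan is to exploit the compactness of $\Delta_\alpha$. As a set, $\Delta_\alpha$ is the product $\prod_{i<0}Z\times\prod_{i\ge 0}S$ of finite discrete alphabets, so it is compact, and the metric \eqref{metric} induces exactly this product topology; in particular $\Delta_\alpha$ is a compact Hausdorff space. Since a continuous bijection from a compact space onto a Hausdorff space is automatically a homeomorphism, it is enough to show that $f_\alpha$ is a continuous bijection. (As it happens, the inverse will be visibly of the same type as $f_\alpha$, so I would also be able to read off its continuity directly.)

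For continuity I would isolate the key locality property of the arithmetic in Definition \ref{5.2.2}. Writing $z=f_\alpha(x)$, the positive half of $z$ is produced by the ordinary one-sided base-$j$ successor, so that $z_i$ depends only on $x_0,\dots,x_i$ for $i\ge 0$; the negative half is governed by the carry $s_1=\tau\big((x_0+1)\bmod j\big)=\tau(z_0)$ fed in at position $-1$ and then propagated leftward in base $2$, so that $z_{-m}$ depends only on $x_0,x_{-1},\dots,x_{-m}$ for $m\ge 1$. In either case every coordinate of $x$ influencing $z_i$ has index of absolute value at most $|i|$. Hence if $M(x,y)\ge N$, i.e.\ $x$ and $y$ agree on all coordinates with $|k|<N$, then $z$ and $f_\alpha(y)$ agree on all coordinates with $|i|\le N-1$, giving $M(f_\alpha(x),f_\alpha(y))\ge N$. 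Thus $d(f_\alpha(x),f_\alpha(y))\le d(x,y)$, so $f_\alpha$ is non-expanding and in particular continuous.

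For bijectivity I would exhibit an explicit inverse $g$, the ``predecessor'' machine. Given $z\in\Delta_\alpha$, let the positive half of $g(z)$ be the one-sided base-$j$ predecessor of $(z_0,z_1,\dots)$; for the negative half set $c=\tau(z_0)\in\{a_0,b_1\}=\{0,1\}$ and let the negative half of $g(z)$ be the base-$2$ predecessor of $(\dots,z_{-2},z_{-1})$ when $c=1$, and $(\dots,z_{-2},z_{-1})$ itself when $c=0$. The point is that $c$ is read off directly from the output coordinate $z_0$, so there is no circular dependence between the two halves. I would then verify $g\circ f_\alpha=\mathrm{id}$ and $f_\alpha\circ g=\mathrm{id}$ coordinate by coordinate: on the positive half this is the standard fact that base-$j$ successor and predecessor are mutually inverse, and on the negative half one uses that the carry value $\tau(z_0)$ computed by $f_\alpha$ agrees with the value $\tau(z_0)$ used by $g$, after which successor and predecessor in base $2$ cancel. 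Since $g$ has exactly the same locality structure as $f_\alpha$, the estimate above shows $g$ is also non-expanding; combined with compactness this gives that $f_\alpha$ is a homeomorphism.

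The step I expect to require the most care is the bijection, and specifically the coupling of the two halves through $\tau(z_0)$ together with the infinite-carry cases: the all-$(j-1)$ tail on the right and the all-$b_1$ tail on the left, where adding $1$ resets an entire tail and emits a carry ``to infinity.'' I would treat these exactly as in the classical odometer, where the reset tail becomes constant ($0$, respectively $a_0$) and the vanishing carry causes no inconsistency, and then check that the predecessor rule of $g$ inverts them. Once these boundary cases are confirmed, the remaining verifications are the routine coordinatewise cancellations sketched above.
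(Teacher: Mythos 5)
Your proof is correct, but it takes a genuinely different route from the paper's. The paper splits the statement into three separate verifications: injectivity (assuming $f_\alpha(x)=f_\alpha(y)$ and matching coordinates one at a time, carries included), surjectivity (exhibiting a candidate preimage of the form $(\ldots,y_{-2},y_{-1};y_0-1,y_1,\ldots)$ with a case split on $y_{-1}$), and continuity (computing that the preimage of a basic cylinder is again a cylinder or a finite union of cylinders), before closing with the same compact-space argument you invoke for the inverse. You instead get continuity from the locality estimate $M(f_\alpha(x),f_\alpha(y))\ge M(x,y)$, which is sharper than the paper's cylinder computation since it exhibits $f_\alpha$ as $1$-Lipschitz, and you replace the two halves of bijectivity by a single explicit inverse, the predecessor machine $g$. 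The decisive point in your construction --- that the carry fed into the negative half equals $\tau(z_0)$ and is therefore readable from the \emph{output} --- is exactly what makes $g$ well defined with no circular dependence, and it also quietly repairs a defect in the paper's surjectivity step: the paper's candidate preimage in case (a) produces $y_{-1}+\tau(y_0)$ in coordinate $-1$, which differs from $y_{-1}$ whenever $\tau(y_0)=b_1$, and the borrow needed when $y_0=0$ is never addressed; your $g$ handles both (subtract $\tau(z_0)$ on the left, base-$j$ predecessor on the right, with the all-$(j-1)$ and all-$b_1$ tails resetting as in the classical odometer). What the paper's route buys is an explicit combinatorial description of $f_\alpha^{-1}$ on cylinder sets, bookkeeping of the kind reused later in the minimality argument; what your route buys is brevity, a closed-form inverse, and the observation that $g$ has the same locality structure as $f_\alpha$, so bicontinuity is available directly, without even appealing to compactness.
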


\begin{proof}
\textbf{\( f_\alpha \) is injective:}
Let \( x, y \in \Delta_{\alpha} \) be distinct such that \( f_{\alpha}(x) = f_{\alpha}(y) \). Then,
\small{
    \begin{equation*}
        \begin{split}
            &  f_{\alpha}(x) = f_{\alpha}(y)
             \Rightarrow \\
             &(\dots,z_{-2},z_{-1};x_0 + 1, x_2 + t_1, x_3 + t_2,  \dots) = (\dots,z_{-2},z_{-1};y_0 + 1, y_2 + s_1, y_3 + s_2, \dots). 
             \end{split}
    \end{equation*}
    Which means:
    \begin{equation*}
        \begin{split}
                          \begin{cases}
                x_0 + 1 = y_0 + 1 \Rightarrow x_0=y_0\\ 
                x_1 + t_1 = y_1 + s_1 \\
                x_2 + t_2 = y_2 + s_2 \\
                \hspace{1.5cm} \vdots  
            \end{cases}  
        \end{split}
    \end{equation*}
    and note that $\,z_{-1}=x_{-1}+\tau(x_0+1) = y_{-1}+\tau(y_0+1)$.
}
    By definition,
    \[
        \begin{cases}
            t_1 = 0, \text{ if } x_0 + 1 < j_1 \\ 
            t_1 = 1, \text{ otherwise}
        \end{cases}
        \quad \text{and} \quad
        \begin{cases}
            s_1 = 0, \text{ if } y_0 + 1 < j_1 \\ 
            s_1 = 1, \text{ otherwise}
        \end{cases}
    \]

    Since \( x_0= y_0 \), the two systems above are equal. Hence, \( t_1 = s_1 \) and \( x_1 = y_1 \), also $x_{-1}=y_{-1}$. Continuing this line of reasoning, we get \( x_i = y_i \) for all \( i \in \mathbb{Z} \). Therefore, \( f_{\alpha} \) is injective.\\
\textbf{\( f_\alpha \) is surjective:}
    Let \( y = (\ldots, y_{-2}, y_{-1} ; y_0, y_1, \ldots) \in \Delta_\alpha \). By definition, \( y_i \in \{a_0, b_1\} \) for all \( i < 0 \). We consider the following cases:

    \begin{itemize}
        \item[(a)] If \( y_{-1} = a_0 \):
        since \( y_0 \leq j - 1 \), there exists \( x = (\ldots, y_{-3}, y_{-2}, a_0 ; y_0 - 1, y_1, y_2, \ldots) \) such that:
        \begin{eqnarray*}
        f_\alpha(x) &=&  (\ldots, y_{-3}, y_{-2}, a_0 ; y_0 - 1, y_1, y_2, \ldots) + (\ldots, a_0, a_0 ; 1, 0, \ldots)
        \\
        &=& (\ldots, y_{-3} + s_3, y_{-2} + s_2, a_0 + \tau(y_0) ; y_0 - 1 + 1, y_1 + t_1, y_2 + t_2, \ldots)
        \\
       &=&  (\ldots, y_{-3}, y_{-2}, a_0 ; y_0, y_1, y_2, \ldots) = y.
       \end{eqnarray*}
        \item[(b)] If \( y_{-1} = b_1 \):
       for \( y_0 \leq j_0 - 1 \), then there exists \( x = (\ldots, y_{-3}, y_{-2}, b_1 ; y_0 - 1, y_1, y_2, \ldots) \) such that:
        \begin{eqnarray*}
        f_\alpha(x) &=& (\ldots, y_{-3}, y_{-2}, b_1 ; y_0 - 1, y_1, y_2, \ldots) + (\ldots, a_0, a_0 ; 1, 0, \ldots)
        \\
        &=& (\ldots, y_{-3} + s_3, y_{-2} + s_2, b_1 + \tau(y_0) ; y_0 - 1 + 1, y_1 + t_1, y_2 + t_2, \ldots)
        \\
        &=& (\ldots, y_{-3}, y_{-2}, b_1 ; y_0, y_1, y_2, \ldots) = y.
        \end{eqnarray*}
    \end{itemize}

    Since we have covered both possible cases, we conclude that the bilateral adding machine function is surjective.\\
\textbf{\( f_\alpha \) is bi-continuous:}
We show that the preimage of a basic cylinder set in $\Delta_{\alpha^+}$ is open in $\Delta_{\alpha}$. 
$f_{\alpha}^{-1}\left(C^{i}_{s_i}\right)$ is a cylinder. We simply observe that
\small{
    \begin{equation*}
    f_{\alpha}^{-1}\left(C^{i}_{s_i}\right) = 
        \begin{cases}
            C^{i}_{s_i}, & 
            \text{if $i\geq 0$ and $x_i$s does not simultaneously reach its maximum} \\
            C^{0,\cdots,i}_{x^*,x^*,\cdots,x^*,s_i-1}, & \text{otherwise (with $x^*=j-1$).}
        \end{cases} 
     \end{equation*}
     }
or
\small{
    \begin{equation*}
    f_{\alpha}^{-1}\left(C^{i}_{s_i}\right) = 
        \begin{cases}
            C^{i}_{s_i}, & 
            \text{if $i< 0$ and $x_i$s does not simultaneously reach its maximum} \\
            \cup_{x^*} C^{i,\cdots,-1,0}_{s_i-1,b_1,\cdots,b_1,x^*} & \text{otherwise (with $x^*\in\tau^{-1}(b_1)$).}
        \end{cases}
     \end{equation*}
     }
     That is, given $x \in f_{\alpha}^{-1}\left(C_{i}^{s_i}\right)$ such that $f_{\alpha}(x)$ does not carry over to the $i$-th coordinate, we take $x$ such that $x_i = s_i$. If $f_{\alpha}(x)$ does carry over to the $i$-th coordinate, we simply take $x_i = s_i - 1$. 
    Therefore, the preimage of a basic cylinder set is also a cylinder set and $f_{\alpha}$ is continuous. As the bilateral adding machine space is compact and \( f_\alpha \) is continuous, the inverse function \( f_\alpha^{-1} \) is also continuous.


 
\end{proof}


\begin{remark}\label{R2}
Whenever there is no risk of confusion, we may use the notation $C_{x_{-n}, \ldots, x_0, \ldots, x_k}$ instead of $C_{x_{-n}, \ldots, x_0, \ldots, x_k}^{-n,\dots,-1,0,\dots, k}$. Moreover, for any fixed natural number \( n \) and any fixed non-negative integer \( k \) such that, \( x_i \in S \) for \( i \geq 0 \) and \( x_i \in Z=\{a_0, b_1\} \) for \( i < 0 \), one has:
\[
\Delta_\alpha = \bigcup_{x_{i\geq 0}\in S,\,x_{i<0}\in Z} C_{x_{-n}, \ldots, x_0, \ldots, x_k}.
\]
\end{remark}

\begin{proposition}
The space \( \Delta_\alpha \) is minimal for \( f_\alpha \).
\end{proposition}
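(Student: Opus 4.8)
The plan is to establish minimality in its orbit form: since $f_\alpha$ is a homeomorphism by the previous proposition, the system is minimal exactly when the two-sided orbit of every point is dense, and, the general cylinder sets $\mathcal{B}$ forming a basis of the topology, it suffices to show that for each $x \in \Delta_\alpha$ and each basic cylinder $C = C_{c_{-n}, \ldots, c_{-1}, c_0, \ldots, c_k}$ there is an integer $m$ with $f_\alpha^m(x) \in C$. I would fix such an $x$ and $C$ and produce the required $m$ by treating the nonnegative and the negative coordinates in turn.

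For the nonnegative coordinates the argument is routine. By Definition \ref{5.2.2} the coordinates $z_0, z_1, \ldots$ of $f_\alpha(x)$ depend only on $x_0, x_1, \ldots$ — one adds $1$ at position $0$ and propagates a base-$j$ carry to the right — so the projection $\Delta_\alpha \to S^{\mathbb{N}}$ is a factor map carrying $f_\alpha$ to the classical one-sided base-$j$ adding machine. For that system the block $(x_0, \ldots, x_k)$ of $f_\alpha^m(x)$ realizes every word of $S^{k+1}$ as $m$ runs over $\mathbb{Z}/j^{k+1}\mathbb{Z}$, so the target $(c_0, \ldots, c_k)$ is attained exactly on one residue class $m \equiv m_0 \pmod{j^{k+1}}$.

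The substance lies in the negative coordinates, and this is the step I expect to be the main obstacle. Reading Definition \ref{5.2.2} for $y = (\ldots, a_0, a_0 ; 1, 0, 0, \ldots)$, one application of $f_\alpha$ adds $s_1 = \tau(z_0)\in\{0,1\}$ at position $-1$ and then propagates a binary carry to the left; hence the negative tail evolves exactly as a base-$2$ odometer incremented by $\tau(z_0)$ at each step. Writing $N_m = \#\{1 \le \ell \le m : \tau((x_0+\ell)\bmod j) = b_1\}$, after $m$ iterations the block $(x_{-n}, \ldots, x_{-1})$ has been advanced by $N_m$ in that odometer, so its value modulo $2^n$ equals $(v_0 + N_m)\bmod 2^n$, where $v_0$ is its initial value. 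Over one full positive period of length $j^{k+1}$ the digit $z_0$ runs $j^{k}$ times through each residue modulo $j$, so $N_m$ grows by the fixed amount $j^{k}\,|\tau^{-1}(b_1)|$; restricting to the class $m \equiv m_0 \pmod{j^{k+1}}$, the attainable negative values form the coset $\{(v_0+N_{m_0}) + r\,j^{k}|\tau^{-1}(b_1)| \bmod 2^n : r \in \mathbb{Z}\}$.

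The crux is thus to reach the prescribed block $(c_{-n}, \ldots, c_{-1})$ from within this coset, that is, to solve $r\,j^{k}|\tau^{-1}(b_1)| \equiv V^\ast \pmod{2^n}$ for $r$; here the positive and negative dynamics are genuinely coupled, since the single exponent $m$ must meet both congruences at once. This is solvable for every target precisely when $\gcd\!\big(j^{k}|\tau^{-1}(b_1)|,\,2^n\big)=1$, i.e. when $j^{k}|\tau^{-1}(b_1)|$ is odd. I would isolate this arithmetic condition and verify it for all $k,n$ under the standing hypotheses on $j$ and on the transition map $\tau$; this is exactly the point where such hypotheses become indispensable, and it is the natural bridge to the necessary-and-sufficient characterization announced in the Introduction. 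Once the condition is secured, one picks $r$ solving the congruence, sets $m = m_0 + r\,j^{k+1}$, and obtains $f_\alpha^m(x) \in C$; as $x$ and $C$ were arbitrary, every orbit meets every cylinder and $\Delta_\alpha$ is minimal.
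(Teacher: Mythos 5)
Your reduction is sound as far as it goes, and it is in fact more careful than the paper's own argument: the nonnegative coordinates evolve as a base-$j$ odometer, the negative coordinates as a base-$2$ odometer incremented at each step by $\tau$ of the new $0$-th digit, and meeting a prescribed cylinder amounts to solving your two congruences simultaneously. But the proof is not finished: everything hinges on the step you defer, namely verifying that $j^{k}\,|\tau^{-1}(b_1)|$ is odd ``under the standing hypotheses,'' and no hypothesis of the paper delivers this. The paper assumes only $j\geq 2$ and $\tau$ surjective, so your gcd condition fails whenever $j$ is even or $|\tau^{-1}(b_1)|$ is even --- including the paper's own example $j=4$, $\tau^{-1}(b_1)=\{0,3\}$. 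Concretely, take $j=2$ with $\tau(0)=a_0$, $\tau(1)=b_1$, and let $p=(\ldots,a_0,a_0;0,0,\ldots)$. A direct computation from Definition \ref{5.2.2} shows that, for every $m\in\mathbb{Z}$, the nonnegative digits of $f_\alpha^{m}(p)$ represent $m$ and the negative digits represent $\lceil m/2\rceil$ (as little-endian binary integers, modulo $2^{k+1}$ and $2^{n}$ respectively). Thus if $f_\alpha^{m}(p)$ has digit $0$ in positions $0$ and $1$, then $4\mid m$, hence $\lceil m/2\rceil=m/2$ is even and position $-1$ carries $a_0$; the orbit of $p$ never meets the nonempty open cylinder $C_{b_1,0,0}^{-1,0,1}$, so $\Delta_\alpha$ is not minimal in this case. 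In other words, the gap is not closable: your coset computation, carried to completion, yields a characterization (minimality holds precisely when $j$ and $|\tau^{-1}(b_1)|$ are both odd) rather than an unconditional proof of the proposition as stated.

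For comparison, the paper proves the statement by induction on the cylinder length, counting that $2^{k}j$ iterations push a carry into coordinate $-k-1$; but it announces at the outset that it ``omits the carry operations,'' i.e.\ it suppresses exactly the term $s_1=\tau((x_0+1)\bmod j)$ that your counter $N_m$ keeps track of. It thereby implicitly assumes one negative-side increment per $j$ iterations (in effect $|\tau^{-1}(b_1)|=1$) and that the resulting stride generates all residues modulo $2^{n}$, which is where the parity obstruction you isolated is hidden. So your approach is the right one, and the discrepancy you ran into is a defect of the statement, not of your argument; a correct write-up should either add the parity hypotheses on $j$ and $\tau$ or restrict to transition maps for which your congruence is solvable.
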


\begin{proof}
To simplify the exposition, we will omit the carry operations that occur during the iterations of the function so as not to overload the equations.
We want to show that \( \Delta_\alpha \) is \( f_\alpha \)-minimal. Since \( f_\alpha \) is a homeomorphism, by Proposition 2.3.1, it suffices to show:
\[
\forall x \in \Delta_\alpha, \quad \mathcal{O}_{f_\alpha}(x) \cap C_{y_{-k}, \ldots, y_{-1}, x_0, \ldots, x_k} \neq \emptyset,
\]
where \( y_i \in \{a_0, b_1\} \) for \( i < 0 \) and \( x_i \in S \) for \( i \geq 0 \).
We will prove this using mathematical induction.




%
\textbf{Case $n=2$:}
By Remark \ref{R2}, we have:
\[
\Delta_\alpha = C_{a_0, x_0}^{-1, 0} \cup C_{b_1, x_0}^{-1, 0}
\]
where \( x_0 \in \{0, 1, \ldots, j - 1\} \). Now suppose \( x \in C_{a_0, 0}^{-1, 0} \). Then,
\[
x \in C_{a_0, 0}^{-1, 0} \Rightarrow x = (\ldots, y_{-2}, a_0 ; 0, x_1, \ldots)
\]
\[
\Rightarrow f_\alpha(x) = (\ldots, y_{-2}, a_0 ; 0, x_1, \ldots) + (\ldots, a_0, a_0 ; 1, 0, \ldots)
\]
\[
\Rightarrow f_\alpha(x) = (\ldots, y_{-2} + s_2, a_0 ; 1, x_1 + t_1, \ldots)
\Rightarrow f_\alpha(x) \in C_{a_0, 1}^{-1, 0}.
\]

Similarly,
\[
f_\alpha(x) \in C_{a_0, 1}^{-1, 0} \Rightarrow f_\alpha(x) = (\ldots, y_{-2}, a_0 ; 1, x_1, \ldots)
\]
\[
\Rightarrow f_\alpha^2(x) = (\ldots, y_{-2}, a_0 ; 1, x_1, \ldots) + (\ldots, a_0, a_0 ; 1, 0, \ldots)
\]
\[
= (\ldots, y_{-2} + s_2, a_0 ; 2, x_1 + t_1, \ldots) \in C_{a_0, 2}^{-1, 0}.
\]

If we continue this process, we observe that the iterates of \( x \) under \( f_\alpha \) intersect all cylinders of size 2 that describe the space.

\textbf{Case \( n=2k + 1 \):} Suppose the hypothesis holds for cylinders of size \( 2k + 1 \), that is:
\[
\forall x \in \Delta_\alpha, \quad \mathcal{O}_{f_\alpha}(x) \cap C_{y_{-k}, \ldots, y_{-1}, x_0, \ldots, x_k} \neq \emptyset.
\]

Since the intersection above is nonempty, there exists \( m \in \mathbb{N} \) such that:
\[
f_\alpha^m(x) \in C_{y_{-k}, \ldots, y_{-1}, x_0, \ldots, x_k}.
\]

We aim to find \( n \in \mathbb{N} \) such that \( f_\alpha^n(f_\alpha^m(x)) \) belongs to some cylinder of size \( 2k + 2 \). We know that:
\[
C_{y_{-k}, \ldots, y_{-1}, x_0, \ldots, x_k} = C_{a_0, y_{-k}, \ldots, y_{-1}, x_0, \ldots, x_k}^{-k-1, \ldots, -1, 0, \ldots, k} \cup C_{b_1, y_{-k}, \ldots, y_{-1}, x_0, \ldots, x_k}^{-k-1, \ldots, -1, 0, \ldots, k}.
\]

One can also describe the space \( \Delta_\alpha \) using cylinders of size \( 2k + 2 \) where coordinate \( k+1 \) belongs to \( S \), and in this case, the argument matches the analogous result for the one-sided adding machine.

Without any loss of generality assume that:
\[
f_\alpha^m(x) \in C_{a_0, y_{-k}, \ldots, y_{-1}, x_0, \ldots, x_k}^{-k-1, \ldots, -1, 0, \ldots, k},
\]
then:
\small{ 
\begin{eqnarray}
f_\alpha^{m+1}(x) &=& (\ldots, a_0, y_{-k}, \ldots, y_{-2}, y_{-1} ; x_0, x_1, \ldots, x_k, \ldots) + (\ldots, 0, 0 ; 1, 0, \ldots)\\
&=& (\ldots, a_0 + s_{k-1}, y_{-k} + s_k, \ldots, y_{-2} + s_2, y_{-1} + s_1 ; x_0 + 1, x_1 + t_1, \ldots)
\end{eqnarray}
}
Proceeding with these iterations, we obtain:
\[
f_\alpha^j(f_\alpha^m(x)) = (\ldots, a_0 + s_{k-1} + \ldots, y_{-k} + \ldots, \ldots, b_1 + y_{-1} + \ldots ; x_0, 1 + x_1 + \ldots, x_k + \ldots, \ldots)
\]

Thus, to carry over 1 to coordinate \( -1 \) of the sequence, it suffices to iterate \( j \)-times. Similarly, to carry over 1 to coordinate \( -2 \), it is enough to iterate \( 2j \)-times:
\[
f_\alpha^{j \cdot 2}(f_\alpha^m(x)) = (\ldots, a_0 + s_{k-1} + \ldots, \ldots, b_1 + y_{-2} + \ldots, y_{-1} + \ldots ; x_0, \ldots, 2 + x_1 + \ldots, 1 + x_2 + \ldots, \ldots)
\]

To carry over 1 to coordinate \( -3 \), we iterate \( j \cdot 2^2 \)-times:
\[
f_\alpha^{j \cdot 2^2}(f_\alpha^m(x)) = (\ldots, a_0 + s_{k-1} + \ldots, \ldots, b_1 + y_{-3} + \ldots, y_{-2} + \ldots, y_{-1} + \ldots ; x_0 + \ldots, 1 + x_1 + \ldots, \ldots)
\]


Therefore, to carry over 1 to coordinate \( -k - 1 \), by the equation above, it suffices to iterate exactly \( n = 2^k j \)-times. Without loss of generality, we can choose all carryovers to be zero. Thus,
\[
(f_\alpha^m)^n(x) \in C_{1, y_{-k}, \ldots, y_{-1}, x_0, \ldots, x_k}^{-k-1, \ldots, -1, 0, \ldots, k}
\]
and the intersection with the orbit of \( f_\alpha^m(x) \) is nonempty. Therefore, \( \Delta_\alpha \) is a minimal set.
\end{proof}


In \cite{LMW}, the authors introduce the notion of \emph{S-expansivity} and is shown that zip shift maps are S-expansive local homeomorphisms. This definition extends the classical concept of two-sided expansivity to n-to-1 non-invertible maps and when the map is a homeomorphism, the definition coincides with the standard two-sided expansivity. 

\begin{definition}[\textbf{S-expansivity}]
Let $(X,d)$ be a compact metric space and let $f:X \to X$ be a continuous map. We say that $\gamma > 0$ is an \emph{S-expansivity constant} for $f$ (or that $f$ is \emph{S-expansive}) if for any pair of points $x, y \in X$, there exists an integer $n \in \mathbb{Z}$ such that
\[
d(f^n(x), f^n(y)) > \gamma.
\]
If $n$ is negative, we define $d(f^n(x), f^n(y)) := d([f^n]^{-1}(x), [f^n]^{-1}(y))$, where $[f^n]^{-1}(x)$ denotes an appropriate preimage under $f^{-n}$.
\end{definition}
\begin{proposition}\cite{LMW} \label{ze}
	The n-to-1 zip shift maps are S-expansive local homeomorphisms.
\end{proposition}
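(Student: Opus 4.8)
The plan is to establish the two assertions separately: first that $\sigma_\tau$ is a local homeomorphism, and then that it is $S$-expansive, for which I would take the constant $\gamma = \tfrac12$.

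For the local homeomorphism claim, I would fix a point $x \in \Sigma_{Z,S}$ and restrict $\sigma_\tau$ to the clopen basic cylinder $C^0_{x_0}$ on which the zeroth coordinate is pinned to $x_0$. On this set the only source of non-invertibility---the passage $x_0 \mapsto \tau(x_0)$---is frozen, so the map acts as a pure left shift and is therefore injective; a direct check shows that its image is exactly the cylinder $C^{-1}_{\tau(x_0)}$, which is open. Since cylinders are compact and the ambient space is Hausdorff, the continuous bijection $\sigma_\tau \colon C^0_{x_0} \to C^{-1}_{\tau(x_0)}$ is a homeomorphism, giving the desired chart around $x$. The fibers $\sigma_\tau^{-1}(y)$ have cardinality $|\tau^{-1}(y_{-1})|$, which is the source of the ``$n$-to-$1$'' description.

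For $S$-expansivity, the choice $\gamma = \tfrac12$ makes the inequality $d(u,v) > \gamma$ equivalent to $u_0 \neq v_0$. Given distinct $x,y$, the easy case is that they already differ at some coordinate $i \geq 0$: since one checks by induction that $(\sigma_\tau^{i} u)_0 = u_i$ for every $i \geq 0$ and every $u$, the forward iterate $n=i$ forces $(\sigma_\tau^{i} x)_0 = x_i \neq y_i = (\sigma_\tau^{i} y)_0$, hence $d(\sigma_\tau^{i} x, \sigma_\tau^{i} y) = 1 > \gamma$.

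The remaining, and genuinely delicate, case is when $x$ and $y$ agree on all non-negative coordinates but differ at some negative one; here one must use negative iterates together with the preimage convention in the definition. Let $-m$ be the negative coordinate closest to $0$ at which they differ. Tracking a backward orbit, each application of a preimage branch shifts the negative coordinates one step toward $0$, so after $m-1$ steps the first disagreement sits at coordinate $-1$, while one may choose matching $\tau^{-1}$-branches at coordinate $0$ to keep the two orbits equal there---this is legitimate precisely because the coordinate $-1$ feeding the branch choice still agrees up to that stage. The crux is the final step: once the two points differ at coordinate $-1$, their coordinate-$0$ values are forced into the disjoint fibers $\tau^{-1}(x_{-1})$ and $\tau^{-1}(y_{-1})$, so \emph{every} appropriate pair of preimages differs at coordinate $0$, yielding $d(\sigma_\tau^{-m}(x), \sigma_\tau^{-m}(y)) = 1 > \gamma$ with $n=-m$. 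I expect this last point---that disjointness of the fibers of $\tau$ converts a negative-coordinate discrepancy into a coordinate-$0$ discrepancy under preimages---to be the main obstacle, since it is exactly where the non-invertibility interacts with the convention $d(f^n\cdot, f^n\cdot) := d([f^n]^{-1}\cdot, [f^n]^{-1}\cdot)$; some care is needed to verify that the branch choices can be made consistently so that the two backward orbits remain adapted until the forced split at coordinate $0$.
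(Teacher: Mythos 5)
Your proof is correct and, for the S-expansivity half, follows essentially the same route as the paper: with $\gamma = \tfrac12$, forward iterates carry a disagreement at a coordinate $i \geq 0$ to coordinate $0$, while for a first disagreement at a negative coordinate one takes matched preimage branches until the disjointness of the $\tau$-fibers forces a split at coordinate $0$. Two points where your write-up goes beyond the paper's are worth noting. First, the paper's proof addresses only S-expansivity and never proves the local-homeomorphism claim; your argument that $\sigma_\tau$ restricted to the compact clopen cylinder $C^0_{x_0}$ is a continuous bijection onto the open cylinder $C^{-1}_{\tau(x_0)}$, hence a homeomorphism, fills that gap correctly. Second, in the forward case the paper takes $n = i+1$, which places the original disagreement at coordinate $-1$ as $\tau(x_i)$ versus $\tau(y_i)$ --- values that can coincide since $\tau$ is not injective --- so the claimed distance $1/2^0$ need not hold there; your choice $n = i$, which pins $x_i \neq y_i$ at coordinate $0$, is the correct one. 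Your care about consistent branch choices in the backward case (matching elements of the same fiber while the coordinates still agree) is exactly the content hidden in the paper's phrase ``appropriate preimage,'' and your argument there is sound.
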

\begin{proof}
Let $x\neq y $ then there exists some $i\in \mathbb{Z}$ such that $x_{i}\neq y_{i}$. Let $i$ be the least in modulus of such $i$. If $i>0$, then $n=i+1$ and  $d(\sigma_{\tau}^{n}x,\sigma_{\tau}^{n}y)=\frac{1}{2^0}>1/2$. If $i<0$, then $d(\sigma_{\tau}^{-i}x,\sigma_{\tau}^{-i}y)=d([\sigma_{\tau}^{i}]^{-1}x,[\sigma_{\tau}^{i}]^{-1}y)=\frac{1}{2^0}>1/2$. 
Indeed the zip shift map is an S-expansive dynamical system with $\gamma=1/2$.
\end{proof}
\begin{proposition}
The bilateral adding machine map is not expansive.
\begin{proof}
We show that the adding machine \( f: \Delta_\alpha \to \Delta_\alpha \) is not expansive. Recall that a homeomorphism \( f \) on a metric space \( (X, d) \) is said to be \emph{expansive} if there exists \( \delta > 0 \) such that for all \( x \neq y \in X \), there exists \( n \in \mathbb{Z} \) such that $
d(f^n(x), f^n(y)) > \delta.$
We will show that no such \( \delta \) can exist for the bilateral adding machine. Let \( d \) be the standard metric on \( \Delta_\alpha \), as defined in
\eqref{metric}.
Let \( \delta >0\). 
Then there exists \( N \in \mathbb{N} \) that \( \frac{1}{2^N} < \delta \) and $x\neq y\in \Delta_\alpha$, such that \( d(x, y) = \frac{1}{2^N} < \delta \). Observe that under the adding machine map \( f \), the sequences \( f^n(x) \) and \( f^n(y) \) differ in digits beyond the \( N \)-th place for all \( n \). Since the action of \( f \) only carries forward and both \( x \) and \( y \) agree up to position \( N-1 \), the distance between \( f^n(x) \) and \( f^n(y) \) remains less than \( \delta \) for all \( n \in \mathbb{Z} \). Which contradicts the definition of expansiveness. Hence, the bilateral adding machine is not expansive.
\end{proof}
\end{proposition}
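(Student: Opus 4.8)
The plan is to show that $f_\alpha$ is not expansive by exhibiting, for every candidate constant $\delta>0$, a pair of distinct points whose full two-sided orbits remain $\delta$-close. The natural mechanism is that the adding machine merely \emph{permutes} a suitable finite cylinder partition of $\Delta_\alpha$, and this is precisely the obstruction to expansiveness; morally, the argument is just a verification that $f_\alpha$ is equicontinuous.

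First I would fix $N\in\mathbb{N}$ with $1/2^N<\delta$ and consider the finite partition $\mathcal{P}_N$ of $\Delta_\alpha$ into the cylinders $C_{x_{-(N-1)},\dots,x_{-1},x_0,\dots,x_{N-1}}$ that prescribe every coordinate of modulus $<N$; by Remark \ref{R2} this is a finite partition of the space. The key structural claim is that the coordinates of $f_\alpha(x)$ inside the window $|i|<N$ depend only on the coordinates of $x$ inside that same window. Indeed, on the nonnegative side the carries $t_1,t_2,\dots$ propagate from lower to higher index, so $(f_\alpha(x))_i$ for $0\le i\le N-1$ is determined by $x_0,\dots,x_i$; and on the negative side the carry is seeded at coordinate $0$ through $s_1=\tau((x_0+1)\bmod j)$ and then propagates toward more negative indices, so $(f_\alpha(x))_{-m}$ for $1\le m\le N-1$ is determined by $x_0,x_{-1},\dots,x_{-m}$. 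In either case no digit enters the window from outside it. Consequently $f_\alpha$ sends each cylinder of $\mathcal{P}_N$ into a single cylinder of $\mathcal{P}_N$, inducing a well-defined self-map $\phi$ of the finite set $\mathcal{P}_N$.

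Next I would promote $\phi$ to a bijection. Given any cylinder $C\in\mathcal{P}_N$, pick $z\in C$; by the surjectivity of $f_\alpha$ established in the preceding homeomorphism proposition there is $x$ with $f_\alpha(x)=z$, and if $x$ lies in the cylinder $C'$ then $f_\alpha(C')\subseteq C_{\phi(C')}$ forces $z\in C_{\phi(C')}\cap C$, so disjointness gives $\phi(C')=C$. Hence $\phi$ is onto, and being a self-map of a finite set it is a permutation. Therefore $f_\alpha^{\,n}$ carries each cylinder of $\mathcal{P}_N$ into $\phi^{\,n}(\cdot)$ for every $n\in\mathbb{Z}$, the negative powers being handled by $f_\alpha^{-1}$, which realizes $\phi^{-1}$. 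Choosing any two distinct points $x\neq y$ in a common cylinder of $\mathcal{P}_N$ — possible since $\Delta_\alpha$ is infinite — we obtain $f_\alpha^{\,n}(x)$ and $f_\alpha^{\,n}(y)$ in one cylinder for all $n$, so they agree on all coordinates with $|i|<N$ and $d(f_\alpha^{\,n}(x),f_\alpha^{\,n}(y))\le 1/2^N<\delta$ for every $n\in\mathbb{Z}$. As $\delta>0$ was arbitrary, no expansivity constant exists.

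The main obstacle is the structural claim of the second paragraph: one must verify rigorously that the carry mechanism never imports a coordinate from outside the window $|i|<N$. This is immediate on the positive side, but on the negative side it requires checking carefully that the seed of the negative carries is $\tau$ applied to the already-reduced coordinate $0$ (hence itself inside the window) and that the analogous borrowings under $f_\alpha^{-1}$ behave symmetrically. Once that invariance is confirmed, the permutation argument and the conclusion follow formally.
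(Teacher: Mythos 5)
Your proposal is correct and follows essentially the same route as the paper's proof: both rest on the observation that carries propagate away from coordinate $0$ in either direction, so two points agreeing on the window $|i|<N$ have orbits that keep agreeing there, and hence remain within $1/2^N<\delta$ of each other for all iterates. Your cylinder-permutation argument merely makes rigorous the step the paper asserts loosely --- that the agreement persists under negative iterates as well --- so it is a tightened version of the same proof rather than a genuinely different one.
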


\begin{proposition}\label{prop:2}
Let \( A, B \subset X \) be nonempty subsets. If both \( A \) and \( B \) are \( f \)-minimal, then either \( A = B \) or \( A \cap B = \emptyset \).
\end{proposition}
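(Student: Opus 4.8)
\section*{Proof proposal}

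The plan is to establish the dichotomy directly by showing that as soon as the two minimal sets meet they must coincide. That is, assuming $A \cap B \neq \emptyset$, I would prove $A = B$; the case $A \cap B = \emptyset$ then needs no argument. The entire proof rests on the working characterization of an $f$-minimal set as a nonempty \emph{closed} and \emph{$f$-invariant} set that contains no proper nonempty closed $f$-invariant subset, which is precisely the notion of minimality used in the preceding proposition on $\Delta_\alpha$. No special feature of the adding machine is needed; the statement is a general fact of topological dynamics.

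First I would record that the intersection $M := A \cap B$ is closed, being the intersection of the two closed sets $A$ and $B$. Next I would verify that $M$ is $f$-invariant: since $f = f_\alpha$ is a homeomorphism it is in particular a bijection, so images distribute over intersections, giving
\[
f(A \cap B) = f(A) \cap f(B) = A \cap B,
\]
where the last equality uses $f(A) = A$ and $f(B) = B$. Thus $M$ is a nonempty closed $f$-invariant subset of $A$. By the minimality of $A$ there is no proper such subset, whence $M = A$, i.e. $A \subseteq B$. Running the identical argument with the roles of $A$ and $B$ exchanged yields $M = B$, i.e. $B \subseteq A$. Combining the two inclusions gives $A = B$, as desired.

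The step requiring the most care is the invariance of the intersection, and here the hypothesis that $f$ is a homeomorphism (established in the earlier Proposition) does the real work: for a merely continuous or non-invertible $f$ one only has $f(A \cap B) \subseteq f(A) \cap f(B)$, and $f(A), f(B)$ need not return to $A, B$ on the nose, so the clean equality $f(M) = M$ could fail. Because $f_\alpha$ is bijective and $A, B$ are fully invariant, this subtlety evaporates and $M$ inherits exactly the two properties—closedness and invariance—on which the definition of minimality operates. The remainder is then a purely formal application of minimality twice, symmetrically, and I expect no further obstruction.
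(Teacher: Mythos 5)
Your proof is correct in the setting you fixed ($f$ a homeomorphism), and its skeleton is the same as the paper's: the intersection $M = A \cap B$ is nonempty, closed, invariant, and contained in both minimal sets, so minimality forces $M = A$ and $M = B$. The substantive difference is how the invariance of $M$ is obtained. You push forward, writing $f(A\cap B) = f(A)\cap f(B)$, which requires injectivity; the paper pulls back, using $f^{-1}(A\cap B) = f^{-1}(A)\cap f^{-1}(B) = A\cap B$, exploiting the fact that preimages distribute over intersections for an arbitrary map. This difference has real consequences: Proposition \ref{prop:2} is invoked twice in the proof of Lemma \ref{le1}, where $f\colon X\to X$ is only assumed continuous (and the proposition is applied there to iterates $f^n$ of such maps), so a proof that genuinely needs $f$ to be a homeomorphism would not support those later applications. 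Moreover, your closing claim --- that for non-invertible $f$ the invariance of $M$ ``could fail,'' so the homeomorphism hypothesis ``does the real work'' --- is mistaken: minimality only requires $M$ to be a nonempty closed invariant subset of $A$, and the inclusion $f(A\cap B) \subseteq f(A)\cap f(B) \subseteq A\cap B$, valid for every continuous $f$ with $f(A)\subseteq A$ and $f(B)\subseteq B$, already gives forward invariance of $M$, which is all the minimality argument uses. So your argument does prove the statement for $f_\alpha$, but to serve the paper's later purposes you should weaken your equality $f(M)=M$ to the inclusion $f(M)\subseteq M$; once that is done, the bijectivity hypothesis (and the concern you attached to it) can be dropped entirely.
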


\begin{proof}
Suppose \( A \cap B \neq \emptyset \). Since both \( A \) and \( B \) are \( f \)-minimal, we have
\[
f^{-1}(A \cap B) = f^{-1}(A) \cap f^{-1}(B) = A \cap B.
\]
Note that \( A \cap B \) is closed, \( f \)-invariant, and is simultaneously contained in both \( A \) and \( B \). This contradicts the minimality of \( A \) and \( B \), unless \( A = B \). Therefore, either \( A = B \) or \( A \cap B = \emptyset \).
\end{proof}

\begin{lemma} \label{le1}
Let \( X \) be a compact metric space, \( f: X \rightarrow X \) a continuous map, and suppose \( X \) is \( f \)-minimal. Let \( n \) be a positive integer. Then, for some positive integer \( t \leq n \) and some set \( M \subset X \) that is \( f^n \)-minimal, we have:
\begin{enumerate}
    \item \( X \) is the disjoint union of \( M, f(M), \dots, f^{t-1}(M) \);
    \item Each of the sets \( M, f(M), \dots, f^{t-1}(M) \) is clopen;
    \item The collection \( \{ M, f(M), \dots, f^{t-1}(M) \} \) is the collection of all subsets of \( X \) that are both \( f^t \)-minimal and \( f^n \)-minimal.
\end{enumerate}
Moreover, for each \( x \in X \), \( \overline{\mathcal{O}_{f^n}(x)} \) is an \( f^n \)-minimal set.
\begin{proof} 
The proof of (1) will be provided through different claims.
    \begin{itemize}
        \item
        \textbf{Claim 1:} There exists \( t \leq n \) such that \( f^t(M) \cap M \neq \emptyset \).\\
        Assume otherwise, then:
        \[
        f^t(M) \cap M = \emptyset, \quad \forall t \in \{1,2,\dots,n\}.
        \]
       Which means:
        \[
        f(M) \cap M = \emptyset, \quad f^2(M) \cap M = \emptyset, \quad \dots, \quad f^n(M) \cap M = \emptyset.
        \]
       This contradicts the fact that \( M \) is \( f^n \)-minimal.
        
        \item
        \textbf{Claim 2:} For any \( i < j < t \), \( f^i(M) \cap f^j(M) = \emptyset \).\\
        Suppose otherwise: let \( y \in f^i(M) \cap f^j(M) \). Then:
        \[
        \exists x, z \in M \text{ such that } f^i(x) = y = f^j(z) \Rightarrow f^{j-i}(x) = z \Rightarrow f^{j-i}(M) \cap M \neq \emptyset,
        \]
        which is a contradiction. Hence, the sets are pairwise disjoint.
        
        \item
        \textbf{Claim 3:} If \( M \) is \( f^n \)-minimal, then \( f^i(M) \) is \( f^n \)-minimal for any \( i \in \mathbb{Z} \). \\
        Note that:
        \[
        f^n(f^i(M)) = f^i(f^n(M)) = f^i(M),
        \]
        and \( f^i(M) \) is closed since \( f^i \) is continuous and \( X \) is compact. Suppose \( N \subset f^i(M) \) is a closed, \( f^n \)-invariant subset. Then \( f^{-i}(N) \subset M \), contradicting the minimality of \( M \). Therefore, \( f^i(M) \) is \( f^n \)-minimal. Hence, by Proposition \ref{prop:2}, since \( f^t(M) \) is \( f^n \)-minimal, we have \( f^t(M) = M \).
        
        \item
        \textbf{Claim 4:} The set \( A = M \cup f(M) \cup \dots \cup f^{t-1}(M) \) is \( f \)-minimal. \\
        We show \( A \) is \( f \)-invariant. Let \( x \in f^{-1}(A) \). If \( x \in f^{-1}(M) \), then:
        \[
        f(x) \in M \Rightarrow x \in f^{n-1}(M).
        \]
        If \( n - 1 < t \), the claim follows. If \( n - 1 > t \), then \( n - 1 = m + kt \) for some \( m < t \), so:
        \[
        x \in f^{n-1}(M) = f^{m+kt}(M) = f^m(f^{kt}(M)) = f^m(M),
        \]
        hence \( x \in A \). For other cases, inclusion is straightforward.
        Now, take \( x \in A \). If \( x \in f^{t-1}(M) \), then \( f(x) \in f^t(M) = M \) by Claim 3. Other cases are similar. Thus, \( f^{-1}(A) = A \). Note that, since \( A \) is a finite union of closed sets, it is closed. Also, it has no proper closed \( f \)-invariant subset, because it is a union of pairwise disjoint minimal sets. Hence, \( A \) is \( f \)-minimal.
    \end{itemize}

    Finally, since both \( A \) and \( X \) are \( f \)-minimal, by Proposition \ref{prop:2}:
    \[
    X = M \dot{\cup} f(M) \dot{\cup} \dots \dot{\cup} f^{t-1}(M).
    \]
    
Let see the proof of (2). For each \( f^i(M) \subset X \) with \( 1 \leq i \leq t \), Claim 3 shows it is closed. Since the complement is closed as well, \( f^i(M) \) is also open.

To see the proof of item (3), note that since the sets are pairwise disjoint and \( X \) is \( f \)-minimal, each set in the union is both \( f^t \)-minimal and \( f^i \)-minimal.
Moreover, let \( x \in X \). By items 1 and 3, there exists \( f^i(M) \subset X \) with \( 1 \leq i \leq t_1 \) {\color{red} $1 \leq i \leq t$} such that \( x \in f^i(M) \). Then \( \mathcal{O}_{f^n}(x) \subset f^i(M) \). Since \( f^i(M) \) is \( f^n \)-minimal, we have \( \overline{\mathcal{O}_{f^n}(x)} = f^i(M) \).
\end{proof}
\end{lemma}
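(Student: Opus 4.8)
The plan is to organize the entire argument around a single $f^n$-minimal set $M$ together with its forward translates $f^i(M)$, and to phrase everything through orbit closures so that no injectivity of $f$ is ever needed. As a preliminary I would note that $f$ is automatically surjective: $f(X)$ is nonempty, closed, and $f$-invariant, so $f(X)=X$ by $f$-minimality of $X$; and that an $f^n$-minimal set $M$ exists, by applying Zorn's lemma to the nonempty closed $f^n$-invariant subsets of the compact space $X$. For such an $M$ one has $f^n(M)=M$, since $f^n(M)$ is a nonempty closed $f^n$-invariant subset of the minimal set $M$.

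The cornerstone is to prove that every translate $f^i(M)$ is again $f^n$-minimal, and this is the step I expect to be the main obstacle, because the naive argument (as in the author's Claim~2) secretly uses that $f$ is injective. I would instead argue via orbit closures. Fix $x\in M$, so $\overline{\mathcal{O}_{f^n}(x)}=M$. For $y=f(x)$ we have $\mathcal{O}_{f^n}(y)=f(\mathcal{O}_{f^n}(x))$ because $f$ commutes with $f^n$; and since $X$ is compact metric, $f$ is a closed map, so $\overline{\mathcal{O}_{f^n}(y)}=f\bigl(\overline{\mathcal{O}_{f^n}(x)}\bigr)=f(M)$. Thus the $f^n$-orbit closure of \emph{every} point of $f(M)$ equals $f(M)$, which is exactly $f^n$-minimality of $f(M)$; induction then gives the same for each $f^i(M)$. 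Once each translate is $f^n$-minimal, Proposition~\ref{prop:2} forces any two of them to be equal or disjoint.

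With this established, the structural statements fall out by bookkeeping. The finite union $A=\bigcup_{i=0}^{n-1}f^i(M)$ is closed and satisfies $f(A)=A$ (using $f^n(M)=M$), hence $A=X$ by minimality of $X$. Letting $t$ be the least positive integer with $f^t(M)=M$, the division algorithm combined with $f^n(M)=M$ yields $t\mid n$, and a short computation (if $f^i(M)=f^j(M)$ with $i<j<t$, apply $f^{t-j}$ to get $f^{t-(j-i)}(M)=M$) shows $M,f(M),\dots,f^{t-1}(M)$ are pairwise distinct, so pairwise disjoint, with union $X$; this is item~(1). For item~(2), each $f^i(M)$ is compact hence closed, and its complement is the union of the remaining finitely many closed translates, hence also closed, so each translate is clopen.

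For item~(3) and the final assertion I would use the partition directly. Given $x\in X$, it lies in a unique $f^i(M)$, which is closed and $f^n$-invariant, so $\overline{\mathcal{O}_{f^n}(x)}\subseteq f^i(M)$; minimality of $f^i(M)$ forces equality, proving that every $f^n$-orbit closure is $f^n$-minimal. The same reasoning shows any $f^n$-minimal set coincides with some $f^i(M)$, while each $f^i(M)$ is also $f^t$-minimal because $\mathcal{O}_{f^n}(x)\subseteq\mathcal{O}_{f^t}(x)$ gives $\overline{\mathcal{O}_{f^t}(x)}=f^i(M)$ for $x\in f^i(M)$; hence $\{M,f(M),\dots,f^{t-1}(M)\}$ is precisely the family of sets that are simultaneously $f^t$- and $f^n$-minimal.
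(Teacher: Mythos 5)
Your proof is correct, and although it ends with the same decomposition as the paper---the clopen partition of $X$ into the translates $M, f(M), \dots, f^{t-1}(M)$ of an $f^n$-minimal set---the engine driving it is genuinely different, and better suited to the stated hypotheses. The paper proves disjointness and minimality of the translates by cancellation and preimage arguments: its Claim 2 passes from $f^i(x) = y = f^j(z)$ to a relation of the form $x = f^{j-i}(z)$; its Claim 3 asserts that $N \subset f^i(M)$ implies $f^{-i}(N) \subset M$; and its Claim 4 deduces $f(x)\in M \Rightarrow x \in f^{n-1}(M)$. All three steps tacitly use injectivity of $f$ (or of $f^i$), which is not among the hypotheses---and the lemma is later applied, in the symbolic-model theorem, to maps that are only assumed continuous. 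Your replacement step---$\overline{\mathcal{O}_{f^n}(f(x))} = f\bigl(\overline{\mathcal{O}_{f^n}(x)}\bigr)$ because a continuous map of a compact metric space is a closed map---yields $f^n$-minimality of every translate with no injectivity at all, after which equality-or-disjointness comes from Proposition \ref{prop:2}, and pairwise distinctness comes from arithmetic with the least period $t$ (applying $f^{t-j}$ rather than cancelling $f^i$), again injectivity-free. You also streamline two points: you obtain $X = M \cup f(M) \cup \dots \cup f^{t-1}(M)$ directly from the fact that this closed union is $f$-invariant together with $f$-minimality of $X$, bypassing the paper's Claim 4 that the union is itself $f$-minimal; and you make explicit two facts the paper leaves implicit, namely the existence of an $f^n$-minimal $M$ (Zorn's lemma) and the divisibility $t \mid n$, the latter being exactly what your proof of item (3) (the $f^t$-minimality of each translate via $\mathcal{O}_{f^n}(x) \subseteq \mathcal{O}_{f^t}(x)$) requires. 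In short: same skeleton, but your route repairs real gaps in the paper's argument in the non-invertible setting, at the modest cost of invoking the closed-map property and Zorn's lemma.
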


\begin{corollary}
Let \( X \) be a compact metric space, \( f: X \rightarrow X \) a continuous map, and suppose \( X \) is \( f \)-minimal and \( n > 1 \) an integer. If there exists a proper closed subset \( M \subset X \) that is \( f^n \)-minimal but not \( f^t \)-minimal for any integer \( t < n \), then there exists a continuous map \( \pi: X \rightarrow \mathbb{Z}_n \) such that:
\[
(\pi \circ f)(x) = (\pi(x) + 1) \mod n.
\]
\begin{proof}
From the previous lemma, \( X = M \dot{\cup} f(M) \dot{\cup} \dots \dot{\cup} f^{n-1}(M) \).
Define \( \pi: X \rightarrow \mathbb{Z}_n \) by \( \pi(x) = i \) if \( x \in f^i(M) \) for \( 1 \leq i \leq n \). Given an open set \( U \subset \mathbb{Z}_n \), write \( U = \{i_1, i_2, \dots, i_k\} \) with \( |U| \leq n \). Then:
\[
\pi^{-1}(U) = f^{i_1}(M) \dot{\cup} f^{i_2}(M) \dot{\cup} \dots \dot{\cup} f^{i_k}(M),
\]
which is open since each \( f^i(M) \) is clopen. Thus, \( \pi \) is continuous and satisfies:
\[
(\pi \circ f)(x) = (\pi(x) + 1) \mod n.
\]
\end{proof}
\end{corollary}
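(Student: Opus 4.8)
The plan is to reduce the statement to the structural decomposition furnished by Lemma~\ref{le1}, and then to show that the hypothesis on $M$ forces the number of pieces to be exactly $n$. First I would apply Lemma~\ref{le1} to the given $f^n$-minimal set $M$. This produces an integer $t \leq n$ with $f^t(M) = M$ for which $X = M \,\dot{\cup}\, f(M) \,\dot{\cup}\, \dots \,\dot{\cup}\, f^{t-1}(M)$ is a partition of $X$ into clopen sets (items (1) and (2) of the lemma), the pieces being cyclically permuted by $f$.

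The crucial step is to prove $t = n$, and this is where the specific hypothesis on $M$ enters. I would invoke item (3) of Lemma~\ref{le1}: the collection $\{M, f(M), \dots, f^{t-1}(M)\}$ is \emph{precisely} the family of subsets of $X$ that are simultaneously $f^t$-minimal and $f^n$-minimal. In particular $M$, being a member of this collection, is $f^t$-minimal. Since the hypothesis asserts that $M$ is \emph{not} $f^t$-minimal for any integer $t < n$, we are forced to conclude $t = n$. This gives the full decomposition $X = M \,\dot{\cup}\, f(M) \,\dot{\cup}\, \dots \,\dot{\cup}\, f^{n-1}(M)$ into $n$ clopen pieces with $f^n(M) = M$. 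I expect this to be the main obstacle, in the sense that it is the one place where the assumption on $M$ is genuinely used; everything that follows is formal.

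With the decomposition in hand, I would define $\pi : X \to \mathbb{Z}_n$ by setting $\pi(x) = i$ whenever $x \in f^i(M)$, for $0 \leq i \leq n-1$. This is well defined precisely because the $n$ pieces are pairwise disjoint and cover $X$. Continuity is then immediate: giving $\mathbb{Z}_n$ the discrete topology, the preimage of any subset $U \subseteq \mathbb{Z}_n$ is the union $\bigcup_{i \in U} f^i(M)$ of clopen sets, hence open.

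Finally I would verify the equivariance relation by a direct case check. If $x \in f^i(M)$ with $i < n-1$, then $f(x) \in f^{i+1}(M)$, so $\pi(f(x)) = i+1 = \pi(x)+1$; and if $x \in f^{n-1}(M)$, then $f(x) \in f^n(M) = M = f^0(M)$, so $\pi(f(x)) = 0 = (\pi(x)+1) \bmod n$. In either case $(\pi \circ f)(x) = (\pi(x)+1) \bmod n$, which is the desired identity and completes the argument.
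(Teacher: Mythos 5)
Your proof is correct and follows essentially the same route as the paper: apply Lemma~\ref{le1} to obtain the clopen cyclic decomposition of $X$, define $\pi$ by the index of the piece containing $x$, and deduce continuity from clopenness and the equivariance identity from the cyclic permutation. You are in fact more careful than the paper, which silently asserts the decomposition has exactly $n$ pieces, whereas you explicitly use item (3) of the lemma together with the hypothesis that $M$ is not $f^t$-minimal for any $t<n$ to force $t=n$.
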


\begin{theorem}\label{te1}
    Let $\alpha = (\ldots, 2, 2 ; j, j, \ldots)$ be a sequence of positive integers with $j \geq 2$, and define $m(-i, k) := 2^i \cdot j^{k+1}$ for each $i \in \mathbb{N}$ and $k \in \mathbb{Z}^+$. Let $X$ be a compact metric space and $f \colon X \to X$ a continuous map. Then $f$ is topologically conjugate to $f_\alpha$ if and only if the following statements hold:

\begin{enumerate}
    \item For each pair of integers $(-i, k)$, there exists a cover $\mathcal{P}_{-i,k}$ of $X$ consisting of $m(-i,k)$ non-empty, pairwise disjoint, clopen sets that are cyclically permuted by $f$;
    
    \item For each pair $(-i, k)$, $\mathcal{P}_{-i+n,k+m}$ with $n \in \mathbb{N}$ partitions $\mathcal{P}_{-i+n-1,k+m-1}$;
    
    \item If $W_{-i-1,k+1} \supset W_{-i-2,k+2} \supset \ldots$ is a nested sequence with $W_{-i,k} \in \mathcal{P}_{i,k}$ for each $(-i, k)$, then 
    \[
    \bigcap_{i \in \mathbb{N},\, k \in \mathbb{Z}^+} W_{-i,k}
    \]
    consists of a single point.
\end{enumerate}
\end{theorem}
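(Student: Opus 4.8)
The plan is to prove both implications by exhibiting the canonical ``coordinate'' partitions of $\Delta_\alpha$ and showing that, up to conjugacy, these partitions are precisely the data recorded in (1)--(3). The conceptual backbone is the identification of $\Delta_\alpha$, as a dynamical system under $f_\alpha$, with the inverse limit $\varprojlim_{(-i,k)}\mathbb{Z}/m(-i,k)\mathbb{Z}$, where the bonding maps are the natural reductions and $f_\alpha$ corresponds to translation by $1$. Concretely, a block of coordinates $(x_{-i},\dots,x_{-1},x_0,\dots,x_k)$ ranges over $2^i j^{k+1}=m(-i,k)$ values, and the addition-with-carry of Definition \ref{5.2.2} turns $f_\alpha$ into $+1$ on each such finite quotient. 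Since the index set is directed, it suffices throughout to work along the cofinal sequence $(-n,n)$, with moduli $M_n:=m(-n,n)=2^n j^{n+1}$ and the associated cylinder partitions $\mathcal{C}_n$ of $\Delta_\alpha$.

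For the forward implication, suppose $h\colon X\to\Delta_\alpha$ is a conjugacy with $h\circ f=f_\alpha\circ h$, and set $\mathcal{P}_{-i,k}:=\{h^{-1}(C):C\in\mathcal{C}_{-i,k}\}$. Each $\mathcal{C}_{-i,k}$ consists of $m(-i,k)$ nonempty, pairwise disjoint, clopen cylinders, and a homeomorphism preserves all these features, so the heart of (1) is to check that $f_\alpha$ cyclically permutes $\mathcal{C}_{-i,k}$. First I would observe that the image coordinates $z_{-i},\dots,z_k$ of $f_\alpha(x)$ depend only on $x_{-i},\dots,x_k$, since carries propagate away from coordinate $0$ in both directions and never enter the window from outside it; thus $f_\alpha$ induces a well-defined map $B$ on blocks, and because $f_\alpha$ is a bijection carrying each cylinder onto the block-cylinder of its image, $B$ is a permutation of the $m(-i,k)$ blocks. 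If $B$ had more than one cycle, the union of the cylinders in one cycle would be a proper, nonempty, clopen, $f_\alpha$-invariant set, contradicting minimality (the Proposition above); hence $B$ is a single $m(-i,k)$-cycle. Condition (2) is then the elementary fact that enlarging the coordinate window refines the cylinder partition, and condition (3) is the statement that fixing all coordinates pins down a unique point of the full zip space, i.e. the cylinder diameters shrink to $0$ in the metric \eqref{metric}; both transfer verbatim through $h$.

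For the converse, I would reconstruct a conjugacy directly from the partitions. Working along the cofinal chain, write $\mathcal{C}_n=\mathcal{P}_{-n,n}=\{Q_n^0,\dots,Q_n^{M_n-1}\}$, indexed so that $f(Q_n^l)=Q_n^{(l+1)\bmod M_n}$, and choose the base sets compatibly, namely $Q_{n+1}^0\subset Q_n^0$, which is possible since $\mathcal{C}_{n+1}$ refines $\mathcal{C}_n$ by (2). Then $Q_{n+1}^l=f^l(Q_{n+1}^0)\subset f^l(Q_n^0)=Q_n^{l\bmod M_n}$, so the address maps $\psi_n\colon X\to\mathbb{Z}/M_n\mathbb{Z}$, defined by $\psi_n(x)=l\iff x\in Q_n^l$, are compatible under reduction and satisfy $\psi_n\circ f=\psi_n+1$. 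Assembling $h(x):=(\psi_n(x))_n\in\varprojlim\mathbb{Z}/M_n\mathbb{Z}\cong\Delta_\alpha$ yields a map intertwining $f$ with $f_\alpha=+1$; it is continuous because each $\psi_n$ is continuous on a clopen partition, and it is injective and surjective because condition (3) forces the nested intersections $\bigcap_n Q_n^{\psi_n(x)}$ to be single points, with surjectivity using the finite-intersection property for nested nonempty compact sets. Being a continuous bijection between compact Hausdorff spaces, $h$ is a homeomorphism, hence a topological conjugacy.

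The main obstacle I anticipate is not either implication in isolation but the clean identification $\Delta_\alpha\cong\varprojlim\mathbb{Z}/m(-i,k)\mathbb{Z}$ with $f_\alpha\leftrightarrow +1$, since the system mixes base $j$ on the nonnegative coordinates with base $2$ on the negative ones and couples them through $\tau$; one must verify that the carry rule of Definition \ref{5.2.2} genuinely realizes $+1$ on each $\mathbb{Z}/m(-i,k)\mathbb{Z}$ and that the window reductions are the arithmetic ones. Closely tied to this is the single-cycle claim of the forward direction, whose validity rests entirely on minimality: establishing that $B$ is transitive, equivalently that all $m(-i,k)$ cylinders lie in one $f_\alpha$-orbit, is the step I would scrutinize most carefully, since it is exactly where the count $m(-i,k)=2^i j^{k+1}$ must be consistent with the actual return times of $f_\alpha$.
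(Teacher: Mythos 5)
Your proof follows the same skeleton as the paper's — forward direction by transporting the cylinder partitions of $\Delta_\alpha$ through the conjugacy, converse by rebuilding a homeomorphism from nested intersections via compactness and condition (3) — but you justify the two delicate steps differently, and in both cases more solidly than the paper does. First, the paper merely asserts that $f_\alpha$ cyclically permutes the cylinders of each window (what it actually records, $f_\alpha^{m_i}(C)=C$, only bounds the return time and does not exclude several shorter cycles); you instead note that carries propagate outward from coordinate $0$, so $f_\alpha$ induces a well-defined permutation $B$ of the $m(-i,k)$ blocks, and you force $B$ to be a single cycle by minimality, since a union of cylinders along a shorter cycle would be a proper nonempty clopen invariant set. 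Second, in the converse the paper declares that each element of its cover of $X$ is ``uniquely associated'' to a cylinder, but never explains how this association is made equivariant and compatible across levels, which is exactly what the relation $h\circ f=f_\alpha\circ h$ requires; your indexing $f(Q_n^l)=Q_n^{(l+1)\bmod M_n}$ with compatible base points $Q_{n+1}^0\subset Q_n^0$, the divisibility $M_n\mid M_{n+1}$, and the address maps $\psi_n\colon X\to\mathbb{Z}/M_n\mathbb{Z}$ supply precisely this missing bookkeeping, and the inverse-limit formulation then gives the conjugacy relation in one line. So your route is the same in outline but repairs the two weakest joints of the paper's argument.

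The caveat is the point you yourself flagged for scrutiny, and your suspicion is well founded: the single-cycle property (equivalently the minimality proposition you invoke) is not automatic but depends on $\tau$. On the window $(-1,0)$, one application of $f_\alpha$ sends $(x_{-1},x_0)$ to $\bigl(x_{-1}+\tau(x_0+1),\,x_0+1\bigr)$, so after $j$ steps $x_{-1}$ has increased by $\sum_{s\in S}\tau(s)\pmod 2$ (identifying $a_0=0$, $b_1=1$). If $|\tau^{-1}(b_1)|$ is even this sum vanishes, the window splits into two $j$-cycles instead of one $2j$-cycle, and both minimality and condition (1) for $f_\alpha$ fail; the paper's own example with $j=4$, $\tau(0)=\tau(3)=b_1$, $\tau(1)=\tau(2)=a_0$ is of this kind. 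Conversely, if $|\tau^{-1}(b_1)|$ is odd, the same parity count controls the carries into every negative coordinate and each window is a single $m(-i,k)$-cycle, so your identification of $(\Delta_\alpha,f_\alpha)$ with $+1$ on $\varprojlim\mathbb{Z}/m(-i,k)\mathbb{Z}$ is then correct. This is a gap in the paper's setup rather than in your argument — the paper's proof asserts the same false-in-general statement with no justification at all — but a complete writeup of your proof should state the parity hypothesis on $\tau$ explicitly, since both your forward direction (via minimality) and your converse (via the identification of $\Delta_\alpha$ with the inverse limit) rest on it.
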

\begin{proof}
($\Rightarrow$) 
Suppose the map $f$ is conjugate to the map $f_{\alpha}$. We will show that items (1), (2), and (3) hold for $\Delta_{\alpha}$, and we will use the conjugacy to infer the equivalent properties for $X$.

Indeed, since the collection of all cylinders forms a basis for the space $\Delta_{\alpha}$, it is natural to define the coverings in this space using cylinders. We define the coverings using disjoint cylinders of size $2$ that include both negative and positive coordinates (for more details, see Remark \ref{Rm3}). In this way, we obtain the following covering:
\[
Q_{(-1,0)} = \left\{ C_{-1,0}^{s_{-1},s_0} \, ; \, s_{-1} \in \{a_0,b_1 \},\ s_0 \in \{ 0,1,\dots,j-1\} \right\}
\]
with exactly $m_{(-1,0)} = 2 \cdot j$ elements. For cylinders of size $3$, we obtain:
\[
Q_{(-2,0)} = \left\{ C_{-2,-1,0}^{s_{-2},s_{-1},s_0} \, ; \, s_{-2},s_{-1} \in \{a_0,b_1 \},\ s_0 \in \{ 0,1,\dots,j-1\} \right\}
\]
with exactly $m_{(-2,0)} = 2^2 \cdot j$ elements. 

In general, for each pair $(-i,k)$, which defines cylinders of size $i+k+1$, we obtain:
\[
Q_{(-i,k)} = \left\{ C_{-i,\dots,0,\dots,k}^{s_{-i},\dots,s_0,\dots,s_{k}} \, ; \, s_i \in \{0,1,\dots,j-1\} \text{ for } i \geq 0,\ s_i \in \{a_0,b_1\} \text{ for } i < 0 \right\}
\]
as a covering for $\Delta_{\alpha}$ with $m_{(-i,k)} = 2^i \cdot j^{k+1}$ elements.

We observe:

\begin{enumerate}
    \item 
        \begin{enumerate}
            \item The coverings defined in $\Delta_{\alpha}$ were constructed so that, given a cylinder of size $2i$, the covering $Q_{(-i,k)}$ contains all cylinders that can be formed by permutations of $i$ elements from the set $\{a_0,b_1\}$ and $k$ elements from the set $\{0,1,\dots,j-1\}$ with total $m_{(-i,k)}$ possible permutations.
            \item Note that the cylinders are non-empty, pairwise disjoint, and clopen;
            \item Given a cylinder $C_{-i,\dots,0,\dots,k}^{s_{-i},\dots,s_0,\dots,s_{k}}$ in $Q_{(-i,k)}$, we have:
            \[
            f_{\alpha}^{m_i}\left(C_{-i,\dots,0,\dots,k}^{s_{-i},\dots,s_0,\dots,s_{k}}\right) = C_{-i,\dots,0,\dots,k}^{s_{-i},\dots,s_0,\dots,s_{k}}.
            \]
        \end{enumerate}
    \item Given a cylinder 
    \[
    C_{-i-1,\dots,k+1}^{s_{-i-1},\dots,s_{k+1}} \in Q_{(-i-1,k+1)},
    \]
    there exists a cylinder 
    \[
    C_{-i,\dots,k}^{s_{-i},\dots,s_{k}} \in Q_{(-i,k)}
    \]
    such that:
    \[
    C_{-i-1,\dots,k+1}^{s_{-i-1},\dots,s_{k+1}} \subset C_{-i,\dots,k}^{s_{-i},\dots,s_{k}}.
    \]
    
    \item Consider a nested sequence:
    \[
    C_{-1,0}^{s_{-1},s_0} \supset C_{-2,-1,0,1}^{s_{-2},s_{-1},s_0,s_1} \supset C_{-3,-2,-1,0,1,2}^{s_{-3}, s_{-2}, s_{-1}, s_0, s_1, s_2} \supset \dots
    \]
    with each $C_{-i,\dots,k}^{s_{-i},\dots,s_{k}}$ in $Q_{(-i,k)}$. As the diameter (i.e. $\mathrm{diam}(A) = \sup \{ d(x, y) \mid x, y \in A \}$) of a cylinder  of size $2i$ is $\frac{1}{2^{2i}}$, when $i \rightarrow \infty$, it goes to $0$. By construction, $(\Delta_{\alpha},d)$ is a complete metric space, and it follows that:
    \[
    \bigcap_{i \in \mathbb{N}} C_{-i,\dots,0,\dots,k}^{s_{-i},\dots,s_0,\dots,s_{k}} = \{ x \},
    \]
    with $x \in \Delta_{\alpha}$.
\end{enumerate}
Thus, $Q_{(-i,k)}$ satisfies items (1), (2), and (3) of the theorem. By hypothesis, $f$ is topologically conjugate to $f_{\alpha}$, so there exists a homeomorphism $h: X \rightarrow \Delta_{\alpha}$ such that $h \circ f = f_{\alpha} \circ h$. Therefore, we can construct a covering:
\[
P_{(-i,k)} = \left\{ X_{-i,\dots,0,\dots,k}^{l_{-i},\dots,l_0,\dots,l_k} \, ; \, l_r \in \{a_0,b_1\} \text{ for } r<0,\ l_r \in \{0,1,\dots,j-1\} \text{ for } r \geq 0 \right\}
\]
of $X$, where $X_{-i,\dots,0,\dots,k}^{l_{-i},\dots,l_0,\dots,l_k} = h^{-1}(C_{-i,\dots,0,\dots,k}^{s_{-i},\dots,s_0,\dots,s_k})$ and the notation $X_{-i,\dots,0,\dots,k}^{l_{-i},\dots,l_0,\dots,l_k}$ is used solely to emphasize its correspondence with cylinder sets. 

\begin{enumerate}
    \item 
        \begin{enumerate}
            \item Since $h$ is a homeomorphism, it preserves cardinality. Thus, $|P_{(-i,k)}| = m_{(-i,k)}$;
            
            \item Being a homeomorphism, $h$ maps clopen sets to clopen sets. Hence, $$X_{-i,\dots,0,\dots,k}^{l_{-i},\dots,l_k} = h^{-1}(C_{-i,\dots,0,\dots,k}^{s_{-i},\dots,s_k})$$ is non-empty, clopen, and any two elements in $P_{(-i,k)}$ are disjoint because the cylinders in $Q_{(-i,k)}$ are pairwise disjoint;
            
            \item Let $X_{-i,\dots,0,\dots,k}^{l_{-i},\dots,l_k} = h^{-1}(C_{-i,\dots,0,\dots,k}^{s_{-i},\dots,s_k})$ be an element of $P_{(-i,k)}$. Since $f_{\alpha}$ cyclically permutes the cylinders, there exists $n \in \mathbb{N}$ such that:
            \[
            f_{\alpha}^n(C_{-i,\dots,k}^{s_{-i},\dots,s_k}) = C_{-i,\dots,k}^{s_{-i},\dots,s_k}.
            \]
            Because $f$ and $f_{\alpha}$ are conjugate, we have:
            \small{
            \begin{align*}
               \hspace{2cm} f^n = h^{-1} \circ f_{\alpha}^n \circ h 
                &\Rightarrow\ f^n(h^{-1}(C_{-i,\dots,k}^{s_{-i},\dots,s_k})) = h^{-1}(f_{\alpha}^n(C_{-i,\dots,k}^{s_{-i},\dots,s_k})) 
                &= h^{-1}(C_{-i,\dots,k}^{s_{-i},\dots,s_k}) \\
                &\Rightarrow\ f^n(X_{-i,\dots,k}^{l_{-i},\dots,l_k}) = X_{-i,\dots,k}^{l_{-i},\dots,l_k}.
            \end{align*}
            }
            Therefore, $f$ cyclically permutes the elements of the covering.
        \end{enumerate}

    \item Since $Q_{(-i-1,k+1)}$ refines $Q_{(-i,k)}$, we have: $C_{-i-1,\dots,k+1}^{s_{-i-1},\dots,s_{k+1}} \subset C_{-i,\dots,k}^{s_{-i},\dots,s_k}.$
    Applying $h^{-1}$ gives: $X_{-i-1,\dots,k+1}^{l_{-i-1},\dots,l_{k+1}} \subset X_{-i,\dots,k}^{l_{-i},\dots,l_k},$ 
    hence $P_{(-i-1,k+1)}$ refines $P_{(-i,k)}$.

    \item Consider a nested sequence:
    \[
    X_{(-1,0)}^{l_{-1},l_0} \supset X_{(-2,-1,0,1)}^{l_{-2},l_{-1},l_0,l_1} \supset \dots
    \]
    with each $X_{-i,\dots,k}^{l_{-i},\dots,l_k} \in P_{(-i,k)}$. Then:
    \begin{align*}
        \bigcap_{i \in \mathbb{N}} X_{-i,\dots,k}^{l_{-i},\dots,l_k} 
        = \bigcap_{i \in \mathbb{N}} h^{-1}(C_{-i,\dots,k}^{s_{-i},\dots,s_k}) 
        = h^{-1} \left( \bigcap_{i \in \mathbb{N}} C_{-i,\dots,k}^{s_{-i},\dots,s_k} \right) 
        = h^{-1}(\{x\}),
    \end{align*}
    with $x \in \Delta_{\alpha}$. Since $h$ is a bijective homeomorphism, $h^{-1}(\{x\})$ consists of a single point.
\end{enumerate}
Therefore, items (1), (2), and (3) hold.
\hfill\\
$(\Leftarrow)$ 
Now, assume that for each pair of integers $(-i,k)$, $P_{(-i,k)}$ represents a covering of $X$ that satisfies items (1), (2), and (3). We use the notation:
\[
P_{(-i,k)} = \left\{ X_{-i,\dots,0,\dots,k}^{l_{-i},\dots,l_0,\dots,l_k} \, ; \, l_i \in \{a_0,b_1\} \text{ for } i<0, \, l_i \in \{0,1,\dots,j-1\} \text{ for } i \geq 0 \right\}.
\]
Observe that, by item (1), the coverings are structured such that their cardinalities characterize each cover. From the forward implication, we know that there exist coverings $Q_{(-i,k)}$ in the space $(\Delta_{\alpha},f_{\alpha})$ that satisfy items (1), (2), and (3) of the theorem. 

Thus, we define a map $h: X \rightarrow \Delta_{\alpha}$, where $\Delta_{\alpha}$ is covered by collections $Q_{(-i,k)}$ that satisfy the same conditions, and each element of $P_{(-i,k)}$ is uniquely associated with an element of $Q_{(-i,k)}$ for a fixed pair $(-i,k)$, with $|P_{(-i,k)}| = |Q_{(-i,k)}|$. 

Given $x \in X$, for each $(-i,k)$, there exists a unique element $X_{-i,\dots,0,\dots,k}^{l_{-i},\dots,l_k}$ that contains $x$. Since:
\[
\bigcap\limits_{i \in \mathbb{N}, \, k \in \mathbb{Z}_+} C_{-i,\dots,0,\dots,k}^{s_{-i},\dots,s_k} = \{y\}
\]
where $y \in \Delta_{\alpha}$ and the cylinders belong to the partition $Q_{(-i,k)}$, we define $h(x) = y$. We now prove that $h$ is a homeomorphism:
\begin{enumerate}
    \item The map $h$ is well-defined because for each $x \in X$, the intersection
    \[
    \bigcap\limits_{i \in \mathbb{N}, \, k \in \mathbb{Z}_+} C_{-i,\dots,0,\dots,k}^{s_{-i},\dots,s_k}
    \]
    contains a unique point $y$.

    \item Let 
    \[
    V = \bigcup\limits_{i \in \mathbb{N}, \, k \in \mathbb{Z}_+ } C_{-i,\dots,0,\dots,k}^{s_{-i},\dots,s_k}
    \]
    be an open set in $\Delta_{\alpha}$. Then:
    \[
    h^{-1}(V) = \bigcup\limits_{i \in \mathbb{N}, \, k \in \mathbb{Z}_+} h^{-1}(C_{-i,\dots,0,\dots,k}^{s_{-i},\dots,s_k}),
    \]
    and since the coverings of $X$ and $\Delta_{\alpha}$ satisfy item (1) and are matched in cardinality, $h^{-1}(V)$ is clopen in $(X,f)$. Therefore, $h$ is continuous.

    \item Let $x,y \in X$ with $x \ne y$. Then there exist distinct elements 
    $X_{-i,\dots,k}^{l_{-i},\dots,l_k}$ and $X_{-i,\dots,k}^{m_{-i},\dots,m_k}$ in $P_{(-i,k)}$ containing $x$ and $y$ respectively. By definition:
    \[
    h(x) \in \bigcap\limits_{i, k} C_{-i,\dots,k}^{l_{-i},\dots,l_k}, \quad h(y) \in \bigcap\limits_{i, k} C_{-i,\dots,k}^{m_{-i},\dots,m_k}.
    \]
    These are disjoint since the cylinders are from a partition $Q_{(-i,k)}$. Hence, $h(x) \ne h(y)$, so $h$ is injective.

    \item For $y \in \Delta_{\alpha}$, since 
    \[
    \bigcap\limits_{i,k} C_{-i,\dots,k}^{s_{-i},\dots,s_k} = \{y\},
    \]
    and each $C_{-i,\dots,k}$ corresponds uniquely to some $X_{-i,\dots,k}^{l_{-i},\dots,l_k} \in P_{(-i,k)}$, there exists a unique $x \in X$ such that $h(x) = y$. Hence, $h$ is surjective.

    \item Since $X$ is compact, $h$ is a homeomorphism.
\end{enumerate}

Finally, let us show that $h \circ f = f_{\alpha} \circ h$. Given $x \in X$, there exists a unique pair $(-i,k)$ such that $x \in X_{-i,\dots,k}^{l_{-i},\dots,l_k}$ in $P_{(-i,k)}$. Since $f$ cyclically permutes elements of $P_{(-i,k)}$, we have:
\[
f(X_{-i,\dots,k}^{l_{-i},\dots,l_k}) = X_{-i,\dots,k}^{l_{-i}+u_{-i+1},\dots,l_0+1,\dots,l_k+t_k} \in P_{(-i,k)}.
\]
Thus, for $x \in X_{-i,\dots,k}^{l_{-i},\dots,l_k}$, we have $f(x) \in X_{-i,\dots,k}^{l_{-i}+u_{-i+1},\dots,l_0+1,\dots,l_k+t_k}$ and so:
\[
h(f(x)) \in \bigcap\limits_{i,k} C_{-i,\dots,k}^{l_{-i}+u_{-i+1},\dots,l_0+1,\dots,l_k+t_k}.
\]
On the other hand, since $h(x) \in \bigcap_{i,k} C_{-i,\dots,k}^{l_{-i},\dots,l_k}$ and $f_{\alpha}$ cyclically permutes cylinders:
\[
f_{\alpha}(h(x)) = (\dots, l_{-i}+u_{-i+1}, \dots, l_0+1, \dots, l_k+t_k, \dots) \in \bigcap\limits_{i,k} C_{-i,\dots,k}^{l_{-i}+u_{-i+1},\dots,l_k+t_k}.
\]
Since the intersection determines a unique point, we conclude that $h \circ f = f_{\alpha} \circ h$.

\end{proof}
\begin{remark}[On the necessity of bilateral refinement]\label{Rm3}
Given a covering of $\Delta_{\alpha}$, it is crucial that the refinement of this covering occurs simultaneously in both the positive and negative directions. If the refinement is performed only on one side, item (3) of the theorem is not satisfied. Indeed, consider $Q_{(-i,k)}$ a covering of $\Delta_{\alpha}$ for some fixed $i$. Observe that
\[
C_{-i,\dots,0,\dots,k}^{s_{-i},\dots,s_0,\dots,s_k} \supset C_{-i,\dots,0,\dots,k,k+1}^{s_{-i},\dots,s_0,\dots,s_k,s_{k+1}} \supset \dots
\]
and the intersection
\[
\bigcap\limits_{k \in \mathbb{Z}_+} C_{-i,\dots,0,\dots,k}^{s_{-i},\dots,s_0,\dots,s_k}
\]
is not a single point. In fact, the sequences
\[
(\dots,a_0,s_{-i},\dots,s_0,\dots,s_k,\dots) \quad \text{and} \quad (\dots,b_1,s_{-i},\dots,s_0,\dots,s_k,\dots)
\]
both belong to the intersection above. This demonstrates that refinement on only one side (in this case, the future) is not sufficient to isolate a unique point, thus violating condition (3).

Therefore, in the context of refining a space $X$ with dynamics 
$f$, it is necessary that partitions be generated through simultaneous—or at least frequent—forward and backward iterations of a fixed, chosen partition with $m(-i,k)$ elements. This ensures that the refinement captures the full dynamical behavior of the system in both temporal directions. Such two-sided refinement is always achievable and is crucial for satisfying condition (3) of the theorem.
\end{remark}

\begin{definition}
    Let $X$ be a compact metric space and $f: X \rightarrow X$ a continuous function. If $X$ is an $f$-minimal set, we denote by $S(f)$ the set of positive integers $i$ such that for some $M \subset X$, $M$ is $f^i$-minimal but not $f^j$-minimal for any $j = 1,\dots,i-1$.
\end{definition}

\begin{definition}
    Let $f: X \rightarrow X$ be a continuous function. Given $x \in X$, we say that $x$ is \textbf{regularly recurrent} if for every neighborhood $V$ of $x$, there exists a positive integer $n$ such that for every non-negative integer $k$, $f^{nk}(x) \in V$.
\end{definition}

\begin{definition}
    For every integer $j > 1$ and each prime $p$ dividing $j$, we define $M_j(p)$ as the multiplicity of $p$ in the prime factorization of $j$. If $j=1$ or if $j > 1$ and $p$ does not divide $j$, we define $M_j(p) = 0$. If $M_j(p)$ is arbitrarily large for some prime factor $p$ of $j \in S(f)$, then we define $M_j(p) = \infty$. We also define $M(p) = \max\limits_{j \in S(f)} M_j(p)$.
\end{definition}

\begin{theorem}[Symbolic model for minimal systems with regular recurrence]
Let $X$ be a compact metric space and let $f: X \rightarrow X$ be a continuous function. If $X$ is infinite, $f$-minimal, and there exists a regularly recurrent point $x \in X$, then there exists a sequence $\alpha$ of prime numbers and a continuous, surjective map $\pi: X \rightarrow \Delta_{\alpha}$ such that
\[
\pi \circ f = f_{\alpha} \circ \pi.
\]
Moreover, if $x \in X$ is regularly recurrent, then the fiber over its image is a singleton:
\[
\pi^{-1}(\pi(x)) = \{x\}.
\]
\end{theorem}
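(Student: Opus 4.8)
The plan is to extract from the regularly recurrent point $x$ a refining tower of clopen partitions of $X$ that are cyclically permuted by $f$, and then realize $\pi$ as the induced map into the inverse limit of the associated cyclic groups. Concretely, by regular recurrence, for a decreasing sequence of neighborhoods $V_1 \supseteq V_2 \supseteq \cdots$ with $\mathrm{diam}(V_m) \to 0$ and $\bigcap_m \overline{V_m} = \{x\}$, there are return times $n_m$ with $f^{n_m k}(x) \in V_m$ for all $k \geq 0$. Since any multiple of a return time is again a return time, I may assume $n_1 \mid n_2 \mid \cdots$. Setting $M_m := \overline{\mathcal{O}_{f^{n_m}}(x)}$, the last assertion of Lemma \ref{le1} makes each $M_m$ an $f^{n_m}$-minimal set, and $n_m \mid n_{m+1}$ forces $M_{m+1} \subseteq M_m \subseteq \overline{V_m}$. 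Lemma \ref{le1} then yields a least period $t_m$ with $X = M_m \dot{\cup} f(M_m) \dot{\cup} \cdots \dot{\cup} f^{t_m-1}(M_m)$, all pieces clopen, so that $M_m$ is $f^{t_m}$-minimal but not $f^t$-minimal for any $t < t_m$.

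Next I would organize this tower. The first point is that $t_m \to \infty$: if some value $t$ recurred for infinitely many $m$, then $M_m = \overline{\mathcal{O}_{f^t}(x)}$ would be one fixed $f^t$-minimal set contained in every $\overline{V_m}$, hence in $\{x\}$, making $x$ periodic and $X$ finite, contrary to hypothesis. The second point is divisibility with consistent labelling: since $x \in M_{m+1} = f^{t_{m+1}}(M_{m+1}) \subseteq f^{t_{m+1}\bmod t_m}(M_m)$ while also $x \in f^0(M_m)$, disjointness of the pieces forces $t_m \mid t_{m+1}$, and the finer partition refines the coarser one with the distinguished piece $M_m$ (the one containing $x$) labelled $0$. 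Applying the Corollary to each $M_m$ with $n = t_m$ produces continuous maps $\pi_m \colon X \to \mathbb{Z}_{t_m}$ with $\pi_m \circ f = \pi_m + 1 \bmod t_m$, normalised by $\pi_m(x)=0$; the refinement $f^i(M_{m+1}) \subseteq f^{\,i \bmod t_m}(M_m)$ gives directly $\pi_m = \pi_{m+1} \bmod t_m$. Factoring each ratio $t_{m+1}/t_m$ into primes produces the prime sequence $\alpha$, whose adding machine is $\Delta_\alpha = \varprojlim \mathbb{Z}_{t_m}$, and the compatible family $(\pi_m)$ assembles into a continuous $\pi \colon X \to \Delta_\alpha$.

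I would then verify the three remaining claims. The intertwining $\pi \circ f = f_\alpha \circ \pi$ is immediate because $f_\alpha$ acts as $+1$ on each coordinate, matching $\pi_m \circ f = \pi_m + 1$. Surjectivity follows from minimality: $f$ is onto (as $f(X)$ is a nonempty closed invariant subset of the minimal $X$), so $\pi(X)$ is a nonempty closed $f_\alpha$-invariant subset of $\Delta_\alpha$, and since $(\Delta_\alpha, f_\alpha)$ is minimal this forces $\pi(X) = \Delta_\alpha$. Finally, for the fibre over $\pi(x) = (0,0,\dots)$ one computes $\pi^{-1}(\pi(x)) = \bigcap_m \pi_m^{-1}(0) = \bigcap_m M_m \subseteq \bigcap_m \overline{V_m} = \{x\}$, the promised singleton.

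The main obstacle is the bookkeeping of the partition tower: proving simultaneously that the periods $t_m$ are unbounded and divide one another with the $x$-containing pieces coherently labelled $0$, so that the maps $\pi_m$ are genuinely compatible and the inverse-limit map is well defined. A secondary subtlety, where uniqueness on minimal systems is the right tool, is that any two continuous maps $X \to \mathbb{Z}_{t_m}$ intertwining $f$ with $+1$ and agreeing at $x$ must coincide: their difference is a continuous $f$-invariant function into a discrete group, hence constant by minimality. This is what guarantees that the compatibility $\pi_m = \pi_{m+1}\bmod t_m$ is forced rather than merely arranged. Once the tower is in place, the factor map, its equivariance, surjectivity, and the singleton fibre follow as above.
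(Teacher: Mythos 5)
Your tower construction is correct as far as it goes: the nested minimal sets \(M_m=\overline{\mathcal{O}_{f^{n_m}}(x)}\), the unboundedness and divisibility of the periods \(t_m\), the coordinate maps \(\pi_m\colon X\to\mathbb{Z}_{t_m}\) obtained from the Corollary, and the fiber computation \(\pi^{-1}(\pi(x))=\bigcap_m M_m\subseteq\bigcap_m\overline{V_m}=\{x\}\) are all sound, and this route (a tower anchored at the regularly recurrent point, assembled into an inverse limit) is genuinely different from the paper's, which instead introduces the set \(S(f)\) of admissible periods, proves four closure properties of it, and then re-uses the partition machinery of Theorem \ref{te1} to build \(\pi\).

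The gap is in the single sentence ``factoring each ratio \(t_{m+1}/t_m\) into primes produces the prime sequence \(\alpha\), whose adding machine is \(\Delta_\alpha=\varprojlim\mathbb{Z}_{t_m}\).'' This silently replaces the paper's \(\Delta_\alpha\) by the classical one-sided odometer of an arbitrary prime sequence. In this paper, \(\Delta_\alpha\) is defined (Definition \ref{df2}) only for bilateral sequences \(\alpha=(\ldots,2,2\,;\,p,p,\ldots)\): it is a zip space whose negative coordinates are binary and whose positive coordinates are \(p\)-ary, and by Theorem \ref{te1} its cyclic clopen partitions have cardinalities \(m(-i,k)=2^i\cdot p^{k+1}\). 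Consequently, any equivariant surjection \(X\to\Delta_\alpha\) pulls these back to cyclic clopen partitions of \(X\) of cardinality \(2^i p^{k+1}\) for every \(i,k\); in other words, arbitrarily large powers of \(2\) and of the fixed prime \(p\) must divide your periods \(t_m\). Nothing in your construction guarantees this: for the \(3\)-adic odometer (compact, infinite, minimal, every point regularly recurrent) every \(t_m\) is a power of \(3\), so \(\varprojlim\mathbb{Z}_{t_m}\) exists as a classical odometer but admits no equivariant surjection onto any \(\Delta_\alpha\) of the paper's bilateral form. So the identification \(\Delta_\alpha=\varprojlim\mathbb{Z}_{t_m}\) is not available, and what your argument actually proves is the classical Block--Keesling statement (\cite{artigo}), not the statement as it stands here. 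You should be aware that the paper's own proof meets the same obstruction and resolves it only by fiat: it picks \(\alpha=(\ldots,2,2\,;\,p,p,\ldots)\) ``such that \(M(2)=M(p)=\infty\),'' an assumption on \(S(f)\) that is nowhere verified and that fails in the example above; your write-up makes the corresponding assumption invisible, which is exactly where it breaks against the theorem as written.
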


\begin{proof}
    Before we begin the proof of this theorem, let us prove the following statements:
    \begin{enumerate}[label=\arabic*.]
        \item $S(f)$ is an infinite set: \label{af1}
        \begin{proof}
            It suffices to show that given any $k \in \Z^{*}_+$, there exists $n \in S(f)$ such that $n > k$. Indeed, let $x \in X$ be a regularly recurrent point and let $V$ be a neighborhood of $x$ such that $f^i(x) \notin \overline{V}$ for $i = 1, \dots, k$. By hypothesis, there exists $t \in \Z^{*}_+$ such that $f^{it}(x) \in V$ for all non-negative integers $i$. By Lemma \ref{le1}, $\overline{\mathcal{O}_{f^t}(x)}$ is $f^t$-minimal. Let $n$ be the smallest positive integer such that $\overline{\mathcal{O}_{f^t}(x)}$ is $f^n$-minimal, so $n \in S(f)$. Since $\overline{\mathcal{O}_{f^t}(x)} \subset \overline{V}$, it follows that $n > k$.
        \end{proof}
        
        \item If $k, n \in S(f)$ and $k$ is a multiple of $n$, then $\mathcal{Q}_k$ refines $\mathcal{Q}_n$: \label{af2}
        \begin{proof}
            By Lemma \ref{le1}, for each element of $S(f)$, a unique covering of $X$ can be constructed. Let $\mathcal{Q}_k$ be such a covering, and take $M \in \mathcal{Q}_k$ such that $z \in M$. Then, by Lemma \ref{le1}, $\overline{\mathcal{O}_{f^k}(z)}$ is an $f^k$-minimal set. Since there exists $t \in \Z$ such that $k = tn$, we have $\mathcal{O}_{f^k}(z) \subset \mathcal{O}_{f^n}(z)$, and therefore $\overline{\mathcal{O}_{f^k}(z)} \subset \overline{\mathcal{O}_{f^n}(z)}$. Since $\overline{\mathcal{O}_{f^n}(z)} \in \mathcal{Q}_n$, we conclude that $\mathcal{Q}_k$ refines $\mathcal{Q}_n$.
        \end{proof}
        
        \item Let $k \in S(f)$ and $n$ a positive integer. If $k$ is a multiple of $n$, then $n \in S(f)$: \label{af3}
        \begin{proof}
            Since $k \in S(f)$, there exists $M \subset X$ such that $M$ is $f^k$-minimal but not $f^j$-minimal for any $j = 1, \dots, k - 1$. As in Lemma \ref{le1}, we can construct the set
            \[
            M_1 = M \cup f^n(M) \cup f^{2n}(M) \cup \dots \cup f^{(t-1)n}(M).
            \]
            Observe:
            \begin{eqnarray*}
                f^n(M_1) &=& f^n(M \cup f^n(M) \cup f^{2n}(M) \cup \dots \cup f^{(t-1)n}(M)) \\
                &=& f^n(M) \cup f^{2n}(M) \cup f^{3n}(M) \cup \dots \cup M \\
                &=& M_1
            \end{eqnarray*}
            From the construction, $M_1$ is $f^n$-minimal. Moreover, since $f^i(M_1) \cap M_1 = \emptyset$ for $i = 1, \dots, n - 1$, we conclude that $n \in S(f)$.
        \end{proof}
        
        \item If $k, n \in S(f)$ and are coprime, then $kn \in S(f)$: \label{af4}
        \begin{proof}
            Since $k, n \in S(f)$, there exist subsets $M_1, M_2 \subset X$ such that $M_1$ is $f^k$-minimal but not $f^j$-minimal for $j = 1, \dots, k - 1$, and $M_2$ is $f^n$-minimal but not $f^i$-minimal for $i = 1, \dots, n - 1$. Let $x \in M_1 \cap M_2$ and define $Y = M_1 \cap M_2$. Then, $f^t(Y) \cap Y \neq \emptyset$ if and only if $t$ is a multiple of $n$, and likewise for $k$. Therefore, $f^t(Y) \cap Y = \emptyset$ for $t = 1, \dots, kn - 1$, and thus $kn \in S(f)$.
        \end{proof}
    \end{enumerate}

    Now let $n \in S(f)$. By definition, there exists a set $M \subset X$ such that $M$ is $f^n$-minimal but not $f^l$-minimal for any $l \in \{1, 2, \dots, n-1\}$. Then, by Lemma \ref{le1}, there exists $t \leq n$ such that
$$
X = M \dot{\cup} f(M) \dot{\cup} \dots \dot{\cup} f^{t-1}(M).
$$
Hence, for each $n \in S(f)$, there exists a unique covering $Q_n$ of $X$ consisting of $n$ pairwise disjoint, clopen, $f^n$-minimal sets that are cyclically permuted by $f$.

Let $p$ be a prime number. Consider $\alpha = (\dots, 2, 2\,;\,p, p, \dots)$ a sequence of prime numbers such that $M(2) = M(p) = \infty$, and define $m_i = m_{(-i,k)} = 2^i \cdot p^{k+1}$. Since the prime $p$ appears $M(p)$ times in the sequence, there exists $j \in S(f)$ such that $p$ divides $j$, and by statement \ref{af3}, we conclude that $p \in S(f)$. Then, by statement \ref{af4}, we conclude that $m_i \in S(f)$.

For each $i$, consider the covering $Q_{m_i} = \{X_{i,1}, X_{i,2}, \dots, X_{i,m_i}\}$ of $X$. By Lemma \ref{le1}, this yields item (1) of Theorem \ref{te1}, and by statement \ref{af2}, we obtain item (2) of Theorem \ref{te1}. In this way, we can construct a continuous and surjective map $\pi : X \rightarrow \Delta_{\alpha}$ as in the proof of Theorem \ref{te1} such that $\pi \circ f = f_{\alpha} \circ \pi$.

\par Finally, we prove that $\pi^{-1}(\pi(x)) = \{x\}$. Indeed, let $x \in X$ be a regularly recurrent point, and suppose there exists $y \in X$ with $y \neq x$ such that $y \in \pi^{-1}(\pi(x))$. Then, by the definition of $\pi$, there exists a nested sequence of sets $W_i \in Q_{m_i}$ for each $i$ such that
$$
x, y \in \bigcap\limits_{i \in \mathbb{N}} W_i.
$$
Let $V$ be a neighborhood of $x$ such that $y \notin \overline{V}$. Since $x$ is regularly recurrent, there exists a positive integer $n$ such that $f^{nk}(x) \in V$ for all positive integers $k$. We know that $\overline{\mathcal{O}_{f^n}(x)} \subset \overline{V}$, and by Lemma \ref{le1}, $\overline{\mathcal{O}_{f^n}(x)}$ is an $f^n$-minimal set. Again, by Lemma \ref{le1}, there exists $t \leq n$ such that $\overline{\mathcal{O}_{f^n}(x)}$ is $f^t$-minimal and $t \in S(f)$. Thus, we can choose the sequence $\alpha$ so that there exists $j$ such that $m_j = m_{(-j,k)}$ is a multiple of $t$. It follows that $x \in W_j$, and by statement \ref{af2}, we have $W_j \subset \overline{\mathcal{O}_{f^n}(x)}$. Since $y \in \bigcap\limits_{i \in \mathbb{N}} W_i$, we get $y \in \overline{\mathcal{O}_{f^n}(x)}$, a contradiction. Therefore, $\pi^{-1}(\pi(x)) = \{x\}$.

\end{proof}

Let $\sigma: \Sigma_{Z,S} \to \Sigma_{Z,S}$ denote a full zip shift map on $\Sigma_{Z,S}$ (Definitions \ref{df1}, \ref{df2}). Then we have the following Proposition which shows that bilateral odometers or adding machins are examples of non-zip shift maps.
\begin{proposition}\label{prop:4}
 Let $X \subset \Sigma_{Z,S}$ be a closed subset such that $\sigma(X) = X$. Then there does not exist a sequence $\alpha$ such that the restricted map $\sigma|_X : X \to X$ is topologically conjugate to the bilateral adding machine map $f_\alpha : \Delta_\alpha \to \Delta_\alpha$.
\end{proposition}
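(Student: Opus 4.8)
The plan is to argue by contradiction, exploiting the opposing expansivity behaviour of the two maps: the zip shift $\sigma$ is S-expansive (Proposition \ref{ze}), whereas the bilateral adding machine $f_\alpha$ fails to be expansive (the preceding proposition). So suppose, for contradiction, that for some sequence $\alpha$ there is a homeomorphism $h \colon X \to \Delta_\alpha$ realizing a topological conjugacy, i.e. $h \circ \sigma|_X = f_\alpha \circ h$.

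First I would observe that $\sigma|_X$ must itself be a homeomorphism of $X$. Indeed, $f_\alpha$ is a homeomorphism (by the proposition establishing this) and $h$ is a homeomorphism, so $\sigma|_X = h^{-1} \circ f_\alpha \circ h$ is a composition of bijections and is therefore bijective; being a continuous bijection of the compact metric space $X$, it is a homeomorphism. This is the decisive structural consequence of the hypothesis, since it promotes the notion of S-expansivity to ordinary two-sided expansivity, as noted in the text immediately before the definition of S-expansivity.

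Next I would show that $\sigma|_X$ is expansive with constant $\gamma = 1/2$. For two distinct points $x, y \in X$, let $i$ be the index of smallest modulus at which they differ. If $i \geq 0$, a forward iterate places this disagreement at coordinate $0$, so the distance between the iterates exceeds $1/2$; if $i < 0$, the same separation is achieved using the iterate $\sigma|_X^{\,i}$ of the now well-defined and unique inverse, exactly as in the proof of Proposition \ref{ze}. The point is that on $X$ the ``appropriate preimage'' appearing in the definition of S-expansivity is the genuine, unique preimage inside $X$, so that S-expansivity of the full zip shift restricts to honest expansivity of $\sigma|_X$. Since $\sigma|_X$ is a homeomorphism, it is therefore an expansive homeomorphism.

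Finally, expansivity is a topological conjugacy invariant among homeomorphisms of compact metric spaces: the uniform continuity of $h$ and $h^{-1}$ on the compact spaces lets one transport the expansivity constant, so if $\sigma|_X$ is expansive then $f_\alpha$ is expansive as well. This contradicts the proposition asserting that $f_\alpha$ is not expansive, and the contradiction proves the claim. I expect the only delicate point to be the third step, namely verifying that the restricted map inherits expansivity and in particular that the preimages used at negative times genuinely lie in $X$, which is precisely what the homeomorphism property secured in the first step guarantees.
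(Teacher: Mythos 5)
Your proof is correct and follows essentially the same strategy as the paper's: deriving a contradiction from the opposition between the (S-)expansivity of the zip shift and the non-expansivity of the bilateral adding machine. In fact, your write-up is more careful than the paper's one-line argument, since you explicitly verify the points the paper leaves implicit — that the conjugacy forces $\sigma|_X$ to be a homeomorphism, that S-expansivity of the full shift then restricts to genuine two-sided expansivity of $\sigma|_X$ (with the negative-time preimages uniquely defined inside $X$), and that expansivity is preserved under topological conjugacy on compact metric spaces.
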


\begin{proof}
By Propositions \ref{ze} and \ref{prop:2}, the zip shift map is S-expansive, whereas the odometer map does not possess this property. Therefore, assuming the existence of a topological conjugacy leads to a contradiction.
\end{proof}


\subsection*{Data availability}
This study did not involve the use of any datasets. Therefore, no data are available.
\subsection*{Conflicts of interest}
The authors declare no conflicts of interest.
\subsection*{Acknowledgments} The authors would like to thanks Fundação de Amparo à Pesquisa do Estado de Minas Gerais (FAPEMIG) for partial financial supports. 
\bibliographystyle{alpha}

\begin{thebibliography}{99}

\bibitem{BK} Bezuglyi, S., and Karpel, O. \textbf{Invariant measures for Cantor dynamical systems. Dynamics: Topology and Numbers}, Contemporary Mathematics, Volume 744, 2020.

\bibitem{artigo} Block, L.; Keesling, J. \textbf{A characterization of adding machine maps}. Topology and its Applications, v. 140, n. 2–3, p. 151–161, 2004.

\bibitem{BS} Brin, M.; Stuck, G. \textbf{Introduction to dynamical systems}. Cambridge University Press, 2002.

\bibitem{CMG} Caprio, Danilo A.; Messaoudi, A.; Valle, G. \textbf{Stochastic adding machines based on Bratteli diagrams}. Annales de l'Institut Fourier, v. 70, n. 6, p. 2543–2581, 2020.


\bibitem{DM} Downarowicz, T.; Maass, A. \textbf{Finite-rank Bratteli–Vershik diagrams are expansive}. Ergodic Theory and Dynamical Systems, 28(3):739–747, 2008.

\bibitem{DS} Dykstra, A.; Şahin, A. \textbf{The Morse minimal system is nearly continuously Kakutani equivalent to the binary odometer}. JAMA 132, 311–353 (2017).

\bibitem{Gao-Li} Gao, S.; Li, R. \textbf{Subshifts of finite symbolic rank}. Ergodic Theory and Dynamical Systems, 2025;45(3):807–848.



\bibitem{IST} Iaco, M.R.; Steiner, W.; Tichy, R.F. \textbf{Linear Recursive Odometers and Beta-Expansions}. Uniform Distribution Theory, 11(1):175–186, 2016.



\bibitem{LB} Lind, D.; Marcus, B. \textbf{An introduction to symbolic dynamics and coding}. Cambridge University Press, 1995.

\bibitem{LM} Lamei, S.; Mehdipour, P. \textbf{Zip shift space}, 2022. \url{arXiv:2502.11272v1}.

\bibitem{LMW} Lamei, S.; Mehdipour, P., Vargas, W. \textbf{S-Expansiveness and Zip Shift Maps in Symbolic Dynamics}, 2024 (preprint).

\bibitem{Nia} Nia, M. F. \textbf{Adding machine maps and minimal sets for iterated function systems}. Journal of Dynamical Systems and Geometric Theories, 15(1), 71–83, 2017.

\end{thebibliography}

\end{document}